\newtheorem{theorem}{Theorem}[section]
\newtheorem{prop}{Proposition}[section]
\def\rr{\mathbb{R}}
\def\ss{\mathbb{S}}
\def\bb{\mathbb{B}}
\def\O{\Omega}
\def\p{\partial}
\def\a{\alpha}
\def\l{\lambda}
\def\p{\partial}
\def\S{{\Sigma}}
\def\<{\langle}
\def\>{\rangle}
\def\div{{\rm div}}
\def\n{\nabla}
\def\G{\Gamma}
\def\De{\Delta}
\def\ep{\epsilon}
\numberwithin{equation} {section}
\begin{document}

\title[A partially overdetermined problem in a half ball]{A partially overdetermined problem in a half ball}
\author{Jinyu Guo}
\address{School of Mathematical Sciences\\
Xiamen University\\
361005, Xiamen, P.R. China}
\email{guojinyu14@163.com}
\author{Chao Xia}
\address{School of Mathematical Sciences\\
Xiamen University\\
361005, Xiamen, P.R. China}
\email{chaoxia@xmu.edu.cn}
\thanks{This work is supported by  NSFC (Grant No. 11501480, 11871406) and  the Natural Science Foundation of Fujian Province of China (Grant No. 2017J06003) and the Fundamental Research Funds for the Central Universities (Grant No. 20720180009).
}

\begin{abstract}
In this paper, we study a partially overdetermined mixed boundary value problem in a half ball. We prove that a domain on which this partially overdetermined problem admits a solution if and only if the domain is a spherical cap intersecting $\ss^{n-1}$ orthogonally. As an application, we show that a stationary point for a partially torsional rigidity under a  volume constraint must be a spherical cap.
\end{abstract}

\date{}
\maketitle

\medskip

\tableofcontents

\section{Introduction}
In a celebrated paper \cite{Se}, Serrin initiated the study of the following overdetermined boundary value problem (BVP)
\begin{equation}\label{21}
\begin{cases}{}
\Delta u=1, & \text{in}\ \Omega\\
u=0 ,     & \text{on} \ \partial\Omega\\
\p_\nu u=c, & \text{on}\ \partial\Omega,\\
\end{cases}
\end{equation}
where $\Omega$ is an open, connected, bounded domain in $\mathbb{R}^{n}$ with smooth boundary $\p\O$,   $c\in \rr$ is a constant and $\nu$ is the unit outward normal to $\partial\Omega$.
Serrin proved that if \eqref{21} admits a solution, then  $\Omega$ must be a ball and the solution $u$ is radially symmetric.
Serrin's proof is based on the moving plane method or Alexandrov reflection method, which
has been invented by Alexandrov in order to  prove the famous nowadays so-called Alexandrov's soap bubble theorem \cite{Al}: any closed, embedded hypersurface of constant mean curvature (CMC) must be a round sphere.

It is a common belief that the domain in which Serrin's overdetermined BVP admits a solution has close relationship with the CMC surfaces. On one hand, Serrin's and Alexandrov's proofs of their theorems share the same Alexandrov reflection method. On the other hand, Weinberger \cite{We} and Reilly \cite{Re} offered alternative proofs of Serrin's and Alexandrov's theorems respectively, based on an integral method. In particular, Reilly's  \cite{Re} proof utilizes the Dirichlet BVP
\begin{equation}\label{22}
\begin{cases}{}
\Delta u=1, & \text{in}\ \Omega\\
u=0 ,     & \text{on} \ \partial\Omega\\
\end{cases}
\end{equation}
and conclude that if $\p\O$ is of CMC, then the solution $u$ to \eqref{22} must satisfy $\p_\nu u=c$, that is, $u$ is a solution to \eqref{21}.
We refer to a nice survey paper by Nitsch-Trombetti \cite{NT} for an overview of Serrin type overdetermined problem and a collection of proofs to Serrin's theorem, as well as another nice survey paper by Magnanini \cite{Magnanini} on the relationship of Alexandrov's theorem and Serrin's theorem. See also \cite{Mag} and \cite{Mag2} for some stability results for Alexandrov's theorem and Serrin's theorem. We also refer to \cite{DPW} on a recent progress on the relationship of these two problems for unbounded domains.

{ Serrin's solution to the overdetermined BVP \eqref{21} is also closely related to an isoperimetric-type inequality for the  so-called torsional rigidity. For a bounded connected domain $\O$ with smooth boundary $\p\O$, the torsional rigidity is defined by
$$\tau(\O)=\sup_{0\neq u\in W_0^{1,2}(\O)} \frac{\left(\int_\O v dx\right)^2}{\int_\O |\n v|^2 dx}.$$ By the direct method of Calculus of Variations, the supremum in $\tau(\O)$ is achieved by a multiple of the function $u$
 satisfying \eqref{22}.
The isoperimetric problem for $\tau(\O)$ is answered by Saint-Venant's principle (see e.g. \cite{PO}), which states that  $\tau(\O)\le \tau(B_r)$, where $B_r$ is a ball such that  ${\rm Vol}(\O)={\rm Vol}(B_r)$, with equality holding if and only if $\O=B_r$.
On the other hand, the first variational formula or Hadamard's formula for $\tau(\O)$ tells that a stationary domain for $\tau(\O)$ among domains with fixed volume must satisfy $\p_\nu u=c$. Thus Serrin's solution to the overdetermined BVP \eqref{21} implies the only stationary domains for $\tau(\O)$ are balls.}

In this paper, we { propose to }study a partially overdetermined BVP where the domain lies in a ball.
Let $$\bb^n=\{x=(x_1,\cdots, x_n)\in \rr^n: |x|<1\}$$ be the open unit ball and $$\bb^n_+=\{x\in \bb^n: x_n>0\}$$ be the open unit half ball and $\ss^{n-1}=\p \bb^n$ be the unit sphere.
Let $\O\subset \bb^n$ be an open bounded, connected domain
whose boundary $\p\O$ consists of two parts $\bar{\Sigma}$ and $T=\p\O\setminus \bar{\Sigma}$, where $\S\subset \bb^n$ is a smooth hypersurface  and $T\subset \ss^{n-1}$ meets $\S$ at a common $(n-2)$-dimensional submanifold $\Gamma\subset \ss^{n-1}$.
In particular $\S$ is a smooth embedded hypersurface in $\bb^n$ with boundary  $\Gamma\subset\ss^{n-1}$.

{The free boundary CMC (or minimal) hypersurfaces in $\bb^n$, which means the hypersurfaces have constant mean curvature(or zero mean curvature)      intersecting with $\partial\bb^{n}$ orthogonally}, have recently attracted many attentions. Inspiring results are contained in a series of papers of Fraser-Schoen \cite{FS1, FS2}
 about minimal hypersurfaces with free boundary in a  ball and the first Steklov eigenvalue.
 The simplest examples of the CMC and minimal hypersurfaces  in $\bb^n$ with free boundary are the spherical caps and the geodesic disk in $\bb^n$ intersecting with $\ss^{n-1}$ orthogonally.
Ros-Souam \cite{RS}  have proved, among other results, an Alexandrov-type theorem in this setting by using the Alexandrov reflection method, see also \cite{Wente} for a similar problem in a half space. Precisely, they proved that if $\S$ is a smooth embedded hypersurface in $\bb^n$ intersecting with $\ss^{n-1}$ orthogonally, and $\Gamma\subset \ss^{n-1}_+$, the hemi-sphere, then $\S$ must be  a spherical cap or the geodesic disk.
Recently, Wang and the second author \cite{WX} found an alternative proof of an Alexandrov-type theorem in this setting when $\O\subset \bb^n_+$, based on an integral method and a solution to some mixed BVP, which is in the spirit of Reilly \cite{Re} and Ros \cite{Ro}.

{Similar to the torsional rigidity for bounded domains in Euclidean space, the following partial torsional rigidity for domains in $\bb_+^n$ can be defined:
\begin{eqnarray}\label{part_torsion}
\tilde{\tau}(\O)=\sup_{0\neq v\in W_0^{1,2}(\O, \S)} \frac{\left(\int_\O v dx\right)^2}{\int_\O |\n v|^2 dx- \int_T v^2 dA}.
\end{eqnarray}
That $\tilde{\tau}(\O)$ is well-defined can be seen in Section 2. One can show that $\int_\O |\n v|^2 dx- \int_T v^2 dA$ is coercive in $W_0^{1,2}(\O, \S)$, see \eqref{coercive} below. Thus by the direct method of Calculus of Variations, the supremum in $\tilde{\tau}(\O)$ is achieved by some multiple of  the unique solution $u$ of
\begin{equation}\label{bvp}
\begin{cases}{}
\Delta u=1 , & \text{in} \ \Omega,\\
u=0 ,     & \text{on} \ \bar \Sigma,\\
 \p_{\bar N}u=u , & \text{on}\ T,
\end{cases}
\end{equation}
where  $\bar N(x)=x$ is the outward unit normal of $T$.
Moreover, we have
\begin{eqnarray}\label{part_torsion1}
\tilde{\tau}(\O)=\int_\O |\n u|^2 dx- \int_T u^2 dA=- \int_\O u dx.
\end{eqnarray}

One may ask whether the isoperimetric-type problem for $\tilde{\tau}(\O)$, that is in fact a Saint-Venant-type principle, holds for $\tilde{\tau}(\O)$.
We remark that in this setting, a relative isoperimetric problem, which asks for the domain with least area among all domains in $\bb_+^n$ with preassigned volume, has its solution as the spherical caps and the geodesic disk in $\bb_+^n$ intersecting with $\ss^{n-1}$ orthogonally,  see \cite{BZ}, Theorem 18.1.3 or \cite{BS}. 

By computing the first variational formula for $\tilde{\tau}(\O)$, one can show (see Proposition \ref{maxim}) that a stationary domain for the partial torsional rigidity among domains with fixed volume carries a function which satisfies the following partially overdetemined BVP
\begin{equation}\label{overd}
\begin{cases}{}
\Delta u=1 , & \text{in} \ \Omega,\\
u=0 ,     & \text{on} \ \bar \Sigma,\\
 \p_{\nu}u=c, & \text{on}\ \bar \Sigma,\\
 \p_{\bar N}u=u , & \text{on}\ T.
\end{cases}
\end{equation}
The first result of this paper is  the following Serrin type rigidity result for \eqref{overd}.
\begin{theorem}\label{mainthm}
 Let $\O$ be an open bounded, connected domain in $\bb^n_+$
whose boundary $\p\O=\bar \S\cup T$, where $\S\subset\bar \bb^{n}_+$ is a smooth hypersurface and $T\subset \ss^{n-1}$  meets $\S$ at a common $(n-2)$-dimensional submanifold $\Gamma\subset \ss^{n-1}$. Let $c\in \rr.$
Assume the partially overdetermined BVP \eqref{overd} admits a weak solution $u\in W_0^{1,2}(\O, \S)$. Assume further that
\begin{eqnarray}\label{reg-ass}
u\in W^{1,\infty}(\O)\cap W^{2,2}(\O).
\end{eqnarray}
 Then $c>0$ and, for some $a\in \ss^{n-1}$, $\O$ must be equal to the domain
 \begin{eqnarray}\label{sph_cap}
\O_{nc}(a):=\{x\in \bb^n_+: |x-a\sqrt{1+(nc)^2}|^2< (nc)^2\}, \quad a\in \ss^{n-1} \end{eqnarray}
 and \begin{eqnarray}\label{cap-sol}
u(x)=u_{a,nc}(x):=\frac{1}{2n}(|x-a\sqrt{1+(nc)^2}|^2- (nc)^2).
\end{eqnarray}
\end{theorem}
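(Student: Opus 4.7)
The plan is to prove Theorem~\ref{mainthm} by a Weinberger--Reilly integral method, in the spirit of \cite{WX}. Every candidate solution \eqref{cap-sol} satisfies $\n^2 u \equiv \tfrac{1}{n} I$, so the entire argument is geared toward deriving this pointwise identity from the overdetermined boundary data; once in hand, reconstructing $u$ and $\O$ is an exercise in matching constants.

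First, I introduce the P-function $P = |\n u|^2 - \tfrac{2u}{n}$. Since $\Delta u \equiv 1$, Bochner's formula gives $\tfrac{1}{2}\Delta P = |\n^2 u|^2 - \tfrac{1}{n}$, which is non-negative by the Cauchy--Schwarz (Newton) inequality, with equality exactly when $\n^2 u = \tfrac{1}{n} I$. On $\bar\S$, the conditions $u = 0$ and $\p_\nu u = c$ force $\n u = c\nu$ and hence $P \equiv c^2$. On $T$, differentiating the Robin condition $\p_{\bar N} u = u$ along directions tangent to $T$ shows that $\bar N$ is an eigenvector of $\n^2 u$ at each point of $T$, which yields an explicit formula for $\p_{\bar N} P$ in terms of $u$ and $u_{\bar N \bar N}$. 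Along the corner $\G = \bar\S \cap T$ one has $u|_\G = 0$ and therefore $c\langle \nu,\bar N\rangle = \p_{\bar N} u = 0$, so (once $c \neq 0$ is known) $\bar\S$ meets $T$ orthogonally.

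Next, the divergence theorem applied to $\Delta P$ converts $\int_\O\bigl(|\n^2 u|^2 - \tfrac{1}{n}\bigr)\,dx$ into a sum of boundary integrals on $\bar\S$ and $T$. Decomposing $\Delta u$ into its tangential and normal components on each boundary piece---using $u = 0, \p_\nu u = c$ on $\bar\S$ and $\tilde H_T = n-1, \p_{\bar N} u = u$ on $T$---and integrating by parts on $T$ (whose boundary term on $\G$ vanishes because $u|_\G = 0$), one obtains an identity of the schematic form
\[
\int_\O \Bigl|\n^2 u - \tfrac{1}{n} I\Bigr|^2 dx \;=\; \tfrac{n-1}{n}|\O| \;-\; c^2 \int_{\bar\S} \tilde H_{\bar\S}\, dA \;+\; \int_T \bigl(|\n_T u|^2 - (n-1) u^2\bigr)\,dA,
\]
where $\tilde H_{\bar\S}$ denotes the (unnormalized) mean curvature of $\bar\S$ and I have also used the balance law $|\O| = c|\S| + \int_T u\, dA$ that comes from $\int_\O \Delta u = \int_{\p\O}\p_\nu u$. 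The right-hand side is then shown to be non-positive via the Heintze--Karcher-type inequality of \cite{WX} for free-boundary hypersurfaces in $\bar\bb^n_+$ meeting $\ss^{n-1}$ orthogonally, which forces $\n^2 u \equiv \tfrac{1}{n} I$ pointwise in $\O$.

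Given $\n^2 u = \tfrac{1}{n} I$, integration yields $u(x) = \tfrac{1}{2n}|x - x_0|^2 + \kappa$ for some $x_0 \in \rr^n$ and $\kappa \in \rr$. The Robin condition on $T$ forces $\kappa = \tfrac{1}{2n}(1 - |x_0|^2)$; then $u = 0$ on $\bar\S$ identifies $\bar\S \subset \{|x - x_0| = r\}$ with $r^2 = |x_0|^2 - 1$, while $|\n u|^2 = c^2$ on $\bar\S$ yields $r = n|c|$. The strong maximum principle shows $u < 0$ in $\O$ and the Hopf lemma on $\bar\S$ gives $c > 0$, so $r = nc$; writing $x_0 = a\sqrt{1+(nc)^2}$ with $a \in \ss^{n-1}$ recovers \eqref{sph_cap} and \eqref{cap-sol}. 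The main obstacle is the non-positivity of the right-hand side of the displayed identity: the classical Weinberger--Pohozaev argument with the radial field $Y = x$ does not close here, because $T$ is curved and produces uncancelled boundary terms, so the geometric input of \cite{WX}, together with its equality characterization as spherical caps orthogonal to $\ss^{n-1}$, is essential. A secondary technical issue---and precisely the reason for the regularity hypothesis \eqref{reg-ass}---is justifying all the integrations by parts across the corner $\G$.
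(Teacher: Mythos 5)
Your overall architecture (the $P$-function $P=|\n u|^2-\tfrac2nu$, its subharmonicity, reduction to $\n^2u=\tfrac1nI$, and the final reconstruction of $\O_{nc}(a)$ and $u_{a,nc}$ from the boundary data) matches the paper, and your last paragraph is correct. But the decisive step --- showing that the boundary terms produced by integrating $\De P$ actually have a sign --- is exactly where your proposal does not close, and the route you indicate does not work. First, the non-positivity of your right-hand side via the Heintze--Karcher inequality of \cite{WX} is unavailable: that inequality requires $\S$ to meet $\ss^{n-1}$ orthogonally and to be mean-convex, neither of which is known a priori here. Your derivation of orthogonality from $c\<\nu,\bar N\>=\p_{\bar N}u=0$ on $\G$ presumes a pointwise trace of $\n u$ on the corner $\G$, which the hypothesis $u\in W^{1,\infty}(\O)\cap W^{2,2}(\O)$ does not provide (the paper explicitly stresses that orthogonality is \emph{not} assumed, and only obtains it a posteriori). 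Second, even granting these, the algebra does not close: the term $\int_T\bigl(|\n_Tu|^2-(n-1)u^2\bigr)dA$ is \emph{non-negative} by the Dirichlet eigenvalue comparison $\l_1^{D}(T)\ge\l_1^{D}(\ss^{n-1}_+)=n-1$ for $T\subset\ss^{n-1}_+$ with $u|_\G=0$, i.e.\ it points the wrong way for your claimed sign; and the balance law $|\O|=c|\S|+\int_Tu\,dA$ carries the extra negative term $\int_Tu\,dA$, which breaks the classical Weinberger/Payne--Schaefer chain of inequalities that would relate $|\O|$, $c^2\int_\S\tilde H$ and $|\S|$.

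The paper's actual mechanism is different and is the essential new idea you are missing: it weights every integration by $x_n\ge0$ and replaces the radial field by the conformal Killing field $X_n=x_nx-\tfrac12(|x|^2+1)E_n$, which satisfies $\tfrac12(\p_iX_n^j+\p_jX_n^i)=x_n\d_{ij}$ and is \emph{tangent} to $\ss^{n-1}$, so the troublesome $T$-boundary terms become tractable. This yields the weighted Pohozaev identity $\int_\O x_n(P-c^2)\,dx=0$ (Proposition \ref{Poho}) and then the exact identity $\int_\O x_nu\,\De P\,dx=0$ (Proposition \ref{int-id}); combined with $x_n\ge0$, $u<0$ (which itself requires the variational argument of Proposition \ref{mp} with the test function $u_+$, since the Robin condition $\p_{\bar N}u=u$ has the unfavorable sign and the maximum principle cannot be applied directly, another point your sketch glosses over) and $\De P\ge0$, this forces $\De P\equiv0$ without any appeal to Heintze--Karcher, to mean-convexity, or to a priori orthogonality. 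As written, your proposal has a genuine gap at its central inequality.
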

We remark that one part of  $\p \O_{nc}(a)$, given by $$C_{nc}(a):=\{x\in \bb^n_+: |x-a\sqrt{1+(nc)^2}|^2=(nc)^2\}$$ is a spherical cap with radius $nc$ which intersects $\ss^{n-1}$ orthogonally.}

By a weak solution  to \eqref{overd}, we mean $$u\in W_0^{1,2}(\O, \S):=\{u\in W^{1,2}(\O), u|_{\bar \S}=0\}$$ satisfies
\begin{eqnarray}\label{weak-form0}
&& \int_\O (\<\n u, \n v\> +v )\, dx -\int_T uv \, dA=0, \hbox{ for all } v\in W_0^{1,2}(\O, \S)
\end{eqnarray}
together with an additional boundary condition $\p_\nu u=c$ on $\S$. A regularity result by Lieberman \cite{Liebm} shows that a weak solution $u$ to \eqref{overd} belongs to $C^\infty(\bar \O\setminus \G)\cap C^\a(\bar \O)$ for some $\a\in (0,1)$. Higher order regularity up to the interface $\G=\bar \S\cap \bar T$  is a subtle issue for mixed boundary value problems.
The regularity assumption \eqref{reg-ass} is for technical reasons, that is, we will use an integration method which requires \eqref{reg-ass} to perform integration by parts.

 We remark that we do not assume that $\S$ meets $\ss^{n-1}$ orthogonally a priori. It seems hard to use the Alexandrov reflection method as Ros-Souam \cite{RS}.
On the other hand, because of the lack of regularity of $u$ on $\G$, it is difficult to use the maximum principle as Weinberger's \cite{We}.

Nevertheless, if $\S$ meets $\ss^{n-1}$ orthogonally,  we can prove the regularity \eqref{reg-ass} (see Proposition \ref{regularity}). Therefore, we have the following
\begin{theorem}\label{mainthm1.5}
 Let $\O$ be as in Theorem \ref{mainthm}. Assume in addition that $\S$ meets $T$
orthogonally at  $\Gamma\subset \ss^{n-1}$. Let $c\in \rr.$
Assume the partially overdetermined BVP \eqref{overd} admits a weak solution $u\in W_0^{1,2}(\O, \S)$.
 Then $c>0$ and, for some $a\in \ss^{n-1}$, $\O$ must be equal to the domain $\O_{nc}(a)$ in \eqref{sph_cap} and $u$ must be equal to the function $u_{a,nc}$ defined in \eqref{cap-sol}.
\end{theorem}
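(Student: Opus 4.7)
My plan is to reduce the theorem to Theorem \ref{mainthm} by showing that the additional orthogonality hypothesis forces the regularity \eqref{reg-ass} to hold automatically for the weak solution $u$. Since the Lieberman result already gives $u \in C^\infty(\bar\Omega \setminus \Gamma) \cap C^\alpha(\bar\Omega)$, the only issue is the behaviour of $u$ near the edge $\Gamma = \bar\Sigma \cap \bar T$, where a Dirichlet condition and a Robin condition meet; this is exactly what Proposition \ref{regularity} is designed to handle.

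The idea is to exploit the special form of the Robin condition on $T$. Because $\bar N(x) = x$ on $T \subset \ss^{n-1}$, the condition $\p_{\bar N} u = u$ can be converted into a homogeneous Neumann condition by introducing the auxiliary function $w(x) := u(x)/|x|$, which is smooth and non-degenerate in a neighbourhood of $\Gamma$. A short calculation gives $\p_{\bar N} w = 0$ on $T$, and $w$ satisfies the uniformly elliptic equation
\begin{equation*}
|x|^2 \Delta w + 2\, x \cdot \n w + (n-1)\, w = |x|
\end{equation*}
together with $w = 0$ on $\bar\Sigma$ near $\Gamma$. So the problem of regularizing $u$ at $\Gamma$ reduces to a mixed Dirichlet/Neumann problem for $w$ with purely homogeneous data on $T$.

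Next I would double $\Omega$ across $T$ via the M\"obius inversion $x \mapsto x^{\ast} := x/|x|^2$, which fixes $T$ pointwise. The orthogonality of $\Sigma$ and $T$ at $\Gamma$ implies that the reflected surface $\Sigma \cup \Sigma^{\ast}$ joins $C^{1,1}$ across $\Gamma$, so the doubled domain $\Omega \cup T \cup \Omega^{\ast}$ has $C^{1,1}$ boundary. The vanishing Neumann data on $T$ then allow an even Kelvin-type extension $\tilde w(x^{\ast}) := w(x)$, multiplied by an appropriate conformal weight so that $\tilde w$ is a weak $W^{1,2}$ solution of a uniformly elliptic equation with bounded right-hand side and zero Dirichlet data on $\Sigma \cup \Sigma^{\ast}$. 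Applying standard $W^{2,2}$ and $W^{1,\infty}$ regularity on $C^{1,1}$ Dirichlet domains yields $\tilde w \in W^{2,2} \cap W^{1,\infty}$ locally, and hence $u = |x|\, w \in W^{1,\infty}(\Omega) \cap W^{2,2}(\Omega)$.

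Once \eqref{reg-ass} is verified, Theorem \ref{mainthm} applies directly and identifies $c > 0$, $\Omega = \Omega_{nc}(a)$ and $u = u_{a,nc}$ for some $a \in \ss^{n-1}$. The main obstacle in the plan is the reflection step: one must choose the Kelvin weight so that the extended equation remains uniformly elliptic with $L^\infty$ coefficients and matches continuously across $T$, and one must verify that orthogonality at $\Gamma$ really does upgrade the reflected boundary from merely Lipschitz to $C^{1,1}$ (which is why the orthogonality hypothesis is essential — without it, a corner singularity at $\Gamma$ would generically obstruct membership of $u$ in $W^{2,2}$).
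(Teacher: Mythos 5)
Your proposal is correct and follows essentially the same route as the paper: Proposition \ref{regularity} likewise converts the Robin condition into a homogeneous Neumann condition by a radial change of unknown (the paper uses $v=ue^{-|x|}$ where you use $u/|x|$; any radial weight with logarithmic derivative $-1$ at $|x|=1$ works), reflects across $T$ by the inversion $x\mapsto x/|x|^2$ using the orthogonality at $\Gamma$ to get a $C^2$ doubled domain, and applies standard elliptic regularity before invoking Theorem \ref{mainthm}. The only cosmetic difference is that the paper uses a plain even extension and verifies the $C^2$ matching of the extended function across $T$ directly, rather than inserting a Kelvin-type conformal weight.
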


We use a purely integral method to prove Theorem \ref{mainthm}. Unlike the usual integration, our integration makes use of a weight function $x_n$, which is non-negative by the assumption $\O\subset\bb^n_+$.  A crucial ingredient in this paper is a conformal Killing vector field $X_n$, which has been proved to be a powerful tool in \cite{WX}. See the end of Section 2 for its definition and properties. By using $X_n$, we get a Pohozaev-type identity, Proposition \ref{Poho}. Then with the usual $P$-function $P=|\n u|^2-\frac{2}{n}u$, we show the identity $$\int_\O x_n u\De Pdx=0.$$ Theorem \ref{mainthm} follows since the $P$-function is subharmonic.


A closely related overdetermined problem in a hemi-sphere has been considered by Qiu and the second author \cite{QX} (see also \cite{CV}).
We also call attention to a similar partially overdetermined problem in a convex cone which has been considered recently by Pacella-Tralli \cite{PA} (see also \cite{CR} for its generalization to general elliptic operators). Their partially overdetermined BVP  is of mixed Dirichlet-Neumann type. Compared to their BVP, one of the difficulties in our case is that the classical maximum principle can not be used directly. Another novelty in the ball case is the use of weight integration.

\


{ Back to the isoperimetric type problem for the partial torsional rigidity, as we already mentioned, our solution to the partially overdetermined BVP \eqref{overd}  in $\O\subset\bb^n_+$ can be used to characterize the stationary domain for the partial torsional rigidity.}
\begin{theorem}\label{mainthm2} Let $\O$ be a stationary domain under fixed volume for the partial torsional rigidity $\tilde{\tau}$. Assume that the function $u$ which attains the supremum in $\tilde{\tau}(\O)$ lies in $W^{1,\infty}(\O)\cap W^{2,2}(\O)$. Then $\O$ must equal the domain $\O_r(a)$ given in \eqref{sph_cap}.
\end{theorem}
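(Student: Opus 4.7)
The plan is to reduce Theorem \ref{mainthm2} to Theorem \ref{mainthm} via a Hadamard-type first variation computation for $\tilde{\tau}$. Stationarity under volume-preserving deformations will be shown to produce the overdetermined condition $\p_\nu u = c$ on $\bar \S$; once that is in hand the rigidity statement of Theorem \ref{mainthm} immediately identifies $\O$ as a spherical cap domain.

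To set up the variation, consider smooth compactly supported vector fields $Y$ on $\bar{\bb}^n_+$ whose restrictions to $T$ are tangent to $\ss^{n-1}$, so that the flow $\phi_t$ of $Y$ produces admissible perturbations $\O_t = \phi_t(\O) \subset \bb^n_+$ with $\phi_t(T) \subset \ss^{n-1}$. Let $u_t$ be the corresponding solution of \eqref{bvp} on $\O_t$ and $u' = \frac{d}{dt}\big|_{t=0}(u_t \circ \phi_t)$ the (Lagrangian) shape derivative. Starting from \eqref{part_torsion1}, which gives $\tilde{\tau}(\O_t) = -\int_{\O_t} u_t \, dx$, the boundary integral in the standard shape-derivative formula vanishes because $u = 0$ on $\bar\S$ and $\<Y,\bar N\> = 0$ on $T$, leaving
\[
\frac{d}{dt}\Big|_{t=0}\tilde{\tau}(\O_t) = -\int_\O u' \, dx.
\]
A short computation shows that $u'$ is harmonic in $\O$, satisfies $u' = -(\p_\nu u)\<Y,\nu\>$ on $\bar\S$, and satisfies a Robin-type identity on $T$ compatible with $\p_{\bar N} u = u$. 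Writing $u' = u'\De u$ and integrating by parts twice, the $T$-contributions cancel and one obtains $\int_\O u' \, dx = -\int_{\bar\S} (\p_\nu u)^2 \<Y,\nu\> \, dA$, whence
\[
\frac{d}{dt}\Big|_{t=0}\tilde{\tau}(\O_t) = \int_{\bar\S} (\p_\nu u)^2 \<Y,\nu\> \, dA.
\]

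The volume constraint reads $\int_{\bar\S} \<Y,\nu\> \, dA = 0$; stationarity of $\tilde{\tau}$ under every admissible $Y$ then forces $(\p_\nu u)^2$ to be constant on $\bar\S$ by the Lagrange multiplier principle, and with a consistent choice of sign this gives $\p_\nu u = c$ on $\bar\S$ for some $c \in \rr$. Together with the conditions in \eqref{bvp}, $u$ is therefore a weak solution of \eqref{overd}. Under the hypothesis $u \in W^{1,\infty}(\O) \cap W^{2,2}(\O)$, Theorem \ref{mainthm} applies and yields $c > 0$ and $\O = \O_{nc}(a)$ for some $a \in \ss^{n-1}$, which is the desired conclusion with $r = nc$.

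The main obstacle is the rigorous shape calculus near the low-regularity interface $\G = \bar\S \cap \bar T$: one must justify that $u_t$ depends smoothly enough on $t$ to admit a shape derivative, that $u'$ satisfies the claimed boundary relations on $\bar\S$ and $T$, and that the integration by parts reducing $\int_\O u' \, dx$ to a pure $\bar\S$-integral is legitimate despite the lack of higher regularity at $\G$. A secondary point is verifying that the class of volume-preserving admissible fields $Y$ produces a sufficiently rich collection of normal velocities $\<Y,\nu\>$ on $\bar\S$ (with zero mean) to conclude pointwise constancy of $(\p_\nu u)^2$ there.
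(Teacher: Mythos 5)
Your proposal is correct and follows essentially the same route as the paper: a Hadamard-type first variation of $\tilde{\tau}(\O)=-\int_\O u\,dx$ yielding $\frac{d}{dt}\big|_{t=0}\tilde{\tau}(\O^t)=\int_\S(\p_\nu u)^2\<Y,\nu\>\,dA$ (Proposition \ref{Had}), a Lagrange-multiplier argument under the volume constraint to get $\p_\nu u$ constant on $\S$ (Proposition \ref{maxim}), and then an application of Theorem \ref{mainthm}. The details of the linearized system for $u'$ and the cancellation of the $T$-terms in the double integration by parts match the paper's computation.
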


Here $\O$ is called stationary for $\tilde{\tau}$ under a fixed volume if for any variation $\O^t\subset \bb^n_+$ with fixed volume,
$$\frac{d}{dt}\Big|_{t=0}\tilde{\tau}(\O^t)=0.$$
We show that if $\O$ is stationary, then the solution $u$ which attains the supremum in $\tilde{\tau}(\O)$ satisfies $\p_\nu u=c$ on $\S$ in addition to \eqref{bvp}, see Section \ref{part-torsion}. That is, $\O$ admits a solution to the mixed overdetermined BVP \eqref{overd}.
Therefore, Theorem \ref{mainthm} implies Theorem \ref{mainthm2}.

{For the isoperimetric-type problem, if one can prove that the supremum $$\sup_{\O\subset\bb^n_+, |\O|={\rm const.}}\tilde{\tau}(\O)$$ is attained by an open domain $\O_0$ with boundary part $\p \O_0\cap \bb^n_+$ being of class $C^2$ and intersecting $\ss^n$ orthogonally, then one may use  Theorem \ref{mainthm2} to get the isoperimetric-type inequality for the partial torsional rigidity.

However, the existence for a maximizer for the shape optimization problem is a subtle problem, even in the classical case. A good reference on the shape oprimization is the book \cite{Henrot}. In general, the maximizer for this kind of shape optimization problems cannot be attained if one restricts to the family of open sets because of the loss of compactness. On the other hand,  the maximizer can be proved to exist if one extends the family of open sets to the so-called quasi-open sets. The problem now reduces to the question that whether a quasi-open maximizer is open as well as it is regular. We hope to address the existence problem in the future.
}
\

The rest of the paper is organized as follows. In Section 2, we study two kinds of eigenvalue problems in $\O$ and use them to prove the existence and uniqueness of the mixed BVP \eqref{bvp}. We also review the conformal Killing vector field $X_n$ and its properties. In Section 3, we prove a weighted Pohozaev inequality and then Theorem \ref{mainthm}. In Section 4, we study the partial torsional rigidity and prove Theorem \ref{mainthm2}.

\

\section{Mixed boundary value problem}

From this section on,  Let $\O$ be an open bounded, connected domain in $\bb^n_+$
whose boundary $\p\O=\bar \S\cup T$, where $\S\subset\bar \bb^{n}_+$ is a smooth hypersurface and $T\subset \ss^{n-1}$  meets $\S$ at a common $(n-2)$-dimensional submanifold $\Gamma\subset \ss^{n-1}$.

 We consider  the following two kinds of eigenvalue problems in $\O$.\\
\noindent{\bf I. Mixed Robin-Dirichlet eigenvalue problem}
 \begin{equation}\label{mixRD_eigen}
\begin{cases}{}
\Delta u=-\l u, &
\hbox{ in } \Omega,\\
u= 0,&\hbox{ on }\bar \S,\\
 \p_{\bar N}u=  u, &\hbox{ on } T.
\end{cases}
\end{equation}
The first Robin-Dirichlet eigenvalue can be variationally characterized  by
\begin{eqnarray}\label{lamet}
\l_1=\inf_{0\ne u\in W_0^{1,2}(\O, \S)}\frac{\int_\O |\n u|^2 dx-\int_T u^2 dA}{\int_\O u^2 dx}.
\end{eqnarray}

\noindent{\bf II. Mixed Steklov-Dirichlet eigenvalue problem.} (see e.g. \cite{Agr, BKPS})
 \begin{equation}\label{mixSD_eigen}
\begin{cases}{}
\Delta u=0, &
\hbox{ in } \Omega,\\
u= 0,&\hbox{ on } \bar \S,\\
 \p_{\bar N}u= \mu u, &\hbox{ on } T.
\end{cases}
\end{equation}
The mixed Steklov-Dirichlet eigenvalue can be seen as the eigenvalue of the Dirichlet-to-Neumann map
\begin{eqnarray*}
\mathcal{L}: &L^2(T)\to &L^2(T)\\
&u\mapsto & \p_{\bar N}\hat u
\end{eqnarray*}
where $\hat u\in W_0^{1,2}(\O, \S)$ is the harmonic extension of $u$ to $\O$ satisfying $u=0$ on $\S$. According to the spectral theory for compact, symmetric linear  operators, $\mathcal{L}$ has a discrete spectrum $\{\mu_i\}_{i=1}^\infty$ (see e.g. \cite{Agr} or \cite{ BKPS}),
$$0<\mu_1\le \mu_2 \le\cdots \to +\infty.$$
The first eigenvalue $\mu_1$ can be variationally characterized  by
\begin{eqnarray}\label{mue}
\mu_1=\inf_{0\ne u\in W_0^{1,2}(\O, \S)}\frac{\int_\O |\n u|^2 dx}{\int_T u^2 dA}.
\end{eqnarray}

In our case, we have
\begin{prop}\label{mixSD_eigen1}\
\begin{itemize}\item[(i)] $\l_1(\O)\ge 0$ and $\l_1=0$ iff $\O= \bb^n_+$.
\item[(ii)] $\mu_1(\O)\ge 1$ and $\mu_1=1$ iff $\O= \bb^n_+$.
\end{itemize}
\end{prop}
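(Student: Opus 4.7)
The plan is to establish (ii) first and deduce (i) as an immediate corollary, the key being that the half-ball $\bb^n_+$ itself realizes the equalities $\mu_1=1$ and $\l_1=0$ with the explicit positive eigenfunction $u(x)=x_n$.

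First, I would verify by direct computation that $x_n$ is a positive eigenfunction of the mixed Steklov-Dirichlet problem on $\bb^n_+$ with eigenvalue $1$: indeed $\De x_n=0$, $x_n=0$ on the flat face $\{x_n=0\}\cap\overline{\bb^n_+}$, and on the spherical part $\ss^{n-1}_+$ the outer normal is $\bar N(x)=x$, so that $\p_{\bar N}x_n=x_n$. Since $x_n$ is strictly positive in the interior, and the principal Steklov-Dirichlet eigenvalue is simple and admits the only sign-definite eigenfunction (a standard consequence of the strong maximum principle applied to minimizers of \eqref{mue}), this forces $\mu_1(\bb^n_+)=1$. The very same function also solves \eqref{mixRD_eigen} with $\l=0$, so $\l_1(\bb^n_+)=0$.

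For a general admissible $\O\subset \bb^n_+$ and a test function $u\in W_0^{1,2}(\O,\S)$, I would extend $u$ by zero to produce $\tilde u$ on the whole half-ball. This is legitimate because $u$ has zero trace on $\bar\S$; since $T$ is the only portion of $\p\O$ lying on $\p\bb^n_+$ that is not contained in $\bar\S$, the extension $\tilde u$ belongs to $W^{1,2}(\bb^n_+)$ and has vanishing trace on $\p\bb^n_+\setminus T$. Hence $\tilde u$ is an admissible competitor in the variational formula \eqref{mue} for the half-ball, and
\[
\frac{\int_\O |\n u|^2\,dx}{\int_T u^2\,dA}=\frac{\int_{\bb^n_+} |\n\tilde u|^2\,dx}{\int_{\ss^{n-1}_+}\tilde u^2\,dA}\ge \mu_1(\bb^n_+)=1,
\]
which yields $\mu_1(\O)\ge 1$, proving (ii). Statement (i) follows at once: this inequality says $\int_\O |\n u|^2\,dx\ge\int_T u^2\,dA$ for every $u\in W_0^{1,2}(\O,\S)$, which by \eqref{lamet} is precisely $\l_1(\O)\ge 0$.

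For the equality cases, suppose $\mu_1(\O)=1$ (equivalently $\l_1(\O)=0$). Then a minimizer $u^*$ on $\O$ extends by zero to a function $\tilde u^*$ whose Rayleigh quotient on $\bb^n_+$ is still $1=\mu_1(\bb^n_+)$, so $\tilde u^*$ is itself a first Steklov-Dirichlet eigenfunction on the half-ball. By simplicity, $\tilde u^*$ is a nonzero scalar multiple of $x_n$ and hence strictly positive throughout $\bb^n_+$. Since $\tilde u^*\equiv 0$ on the open set $\bb^n_+\setminus\overline{\O}$, this open set must be empty, forcing $\O=\bb^n_+$. The converse directions in both parts were already noted in the first step. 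The main technical point to be careful about is the extension-by-zero step---in particular, verifying that $u$ has zero trace on every piece of $\p\O$ lying on $\p\bb^n_+\setminus T$, so that $\tilde u$ is an honest $W^{1,2}$-competitor vanishing on the Dirichlet portion of $\p\bb^n_+$; the remaining ingredient, simplicity of the principal Steklov-Dirichlet eigenvalue on $\bb^n_+$, is a standard consequence of the strong maximum principle.
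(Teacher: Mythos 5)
Your argument is correct, and for the two inequalities it coincides with the paper's proof: exhibit $x_n$ as an explicit nonnegative eigenfunction on $\bb^n_+$ (eigenvalue $0$ for the Robin--Dirichlet problem, $1$ for the Steklov--Dirichlet problem), then extend a competitor on $\O$ by zero to a competitor on $\bb^n_+$ and invoke the variational characterizations \eqref{lamet} and \eqref{mue}. Where you genuinely diverge is in the rigidity (equality) cases. The paper handles $\l_1(\O)>0$ for $\O\subsetneqq\bb^n_+$ by appealing to Aronszajn's unique continuation theorem, and handles $\mu_1(\O)>1$ by citing Proposition 3.1.1 of Ba\~nuelos--Kulczycki--Polterovich--Siudeja. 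You instead give a self-contained argument: the zero extension of a minimizer on $\O$ is a minimizer of the Rayleigh quotient on $\bb^n_+$, hence a first eigenfunction there, hence (by simplicity) a nonzero multiple of $x_n$, which cannot vanish on the nonempty open set $\bb^n_+\setminus\overline{\O}$. This buys you independence from the external reference; in fact you could streamline it further, since the extended minimizer is harmonic in $\bb^n_+$, hence real-analytic, and so cannot vanish on a nonempty open set regardless of simplicity --- which is essentially the unique-continuation idea the paper uses for $\l_1$. Two small points to tighten: (a) your parenthetical ``$\mu_1(\O)=1$ is equivalent to $\l_1(\O)=0$'' is true but needs a line --- the direction $\l_1=0\Rightarrow\mu_1=1$ follows from the coercivity estimate $\int_\O|\n u|^2-\int_T u^2\ge(1-\mu_1^{-1})\int_\O|\n u|^2$ together with the Poincar\'e inequality (alternatively, just run your extension argument separately for a Robin--Dirichlet minimizer); (b) from $\bb^n_+\setminus\overline{\O}=\emptyset$ you should say a word about why $\O=\bb^n_+$ (this uses that $\p\O\cap\bb^n_+\subset\bar\S$ is a smooth hypersurface, so a dense open $\O$ with this boundary structure must be all of $\bb^n_+$). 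Neither issue is a real gap.
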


\begin{proof}
If $\O=\bb^n_+$, one checks that $u=x_n\ge 0$ indeed solves \eqref{mixRD_eigen} with $\l=0$ and \eqref{mixSD_eigen} with $\mu=1$. Since $u=x_n$ is a non-negative solution, it must be the first eigenfunction and hence $\l_1(\bb^n_+)=0$ and $\mu_1(\bb^n_+)=1.$

{On the other hand, for $\O \subset \bb^n_+$, assume $u\in W^{1,2}_{0}(\Omega;\Sigma)$ and $\bar{u}$ is standard zero extension of $u$, then $\bar{u}\in W^{1,2}_{0}(\bb^{n}_{+};\partial\bb^{n}_{+}\backslash\partial\bb)$, therefore applying the variational characterization \eqref{lamet} and \eqref{mue}, one sees that  $\l_1(\O)\ge \l_1(\bb^n_+)=0$ and $\mu_1(\O)\ge\mu_1(\bb^n_+)=1.$}

If $\O\subsetneqq \bb^n_+$, then the Aronszajn unique continuity theorem implies $\l_1(\O)>\l_1(\bb^n_+)=0$.  For $\mu_1$, it has been proved in  \cite{BKPS}, Proposition 3.1.1,  that  $\mu_1(\O)>\mu_1(\bb^n_+)=1$.
\end{proof}

Since \begin{eqnarray}\label{poincare}
\int_{\O} |\n u|^2 dx-\int_T u^2 dA\ge \l_1 \int_{\O} u^2 dx,\hbox{ for } u\in W^{1,2}_0(\O,\S),
\end{eqnarray}
it follows from Proposition \ref{mixSD_eigen1} (i) that the partial torsional rigidity $\tilde{\tau}$ in \eqref{part_torsion} is well-defined.

Using Proposition \ref{mixSD_eigen1} (ii), we show the existence and uniqueness of the mixed BVP.
\begin{prop}\label{existence}Let $f\in C^\infty(\bar \O)$ and $g\in C^\infty(\bar T)$. Then the mixed BVP
\begin{equation}\label{mixSD_nonh}
\begin{cases}{}
\Delta u=f, &
\hbox{ in } \Omega,\\
u= 0,&\hbox{ on } \bar \S,\\
 \p_{\bar N}u= u+g, &\hbox{ on } T.
\end{cases}
\end{equation}
admits a unique solution $u\in C^\infty(\bar \O\setminus \G)\cap C^\a(\bar \O)$ for some $\a\in (0,1)$.
\end{prop}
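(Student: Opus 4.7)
The plan is to recast \eqref{mixSD_nonh} as an operator equation on the Hilbert space $H:=W_0^{1,2}(\O,\S)$ and apply the Lax--Milgram theorem, then upgrade the resulting weak solution via elliptic regularity. Formal integration by parts identifies $u\in H$ as a weak solution precisely when
\begin{eqnarray*}
a(u,v):=\int_\O \<\n u,\n v\>\,dx-\int_T uv\,dA \;=\; -\int_\O fv\,dx+\int_T gv\,dA \;=:\; L(v)
\end{eqnarray*}
for every $v\in H$. The form $a$ is symmetric, and its continuity, together with the continuity of $L$, follows from Cauchy--Schwarz and the trace inequality $\|v\|_{L^2(T)}\le C\|v\|_{W^{1,2}(\O)}$.

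The decisive step is coercivity of $a$ on $H$, and this is where I would invoke Proposition \ref{mixSD_eigen1}(ii). Assuming $\O\subsetneqq\bb^n_+$ (the case $\O=\bb^n_+$ is degenerate, as $x_n$ solves the homogeneous problem), one has $\mu_1(\O)>1$, and hence by the variational characterization \eqref{mue}
\begin{eqnarray*}
a(u,u)=\int_\O|\n u|^2\,dx-\int_T u^2\,dA\;\ge\;\left(1-\frac{1}{\mu_1(\O)}\right)\int_\O|\n u|^2\,dx.
\end{eqnarray*}
Since $u$ vanishes on the hypersurface $\S$ of positive $(n-1)$-dimensional measure, the Poincar\'e inequality on $H$ yields $\int_\O|\n u|^2\,dx\ge c\|u\|_{W^{1,2}(\O)}^2$, so $a$ is coercive. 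The Lax--Milgram theorem then produces a unique weak solution $u\in H$.

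For the regularity claim, interior smoothness is classical. Smoothness up to $\bar\S\setminus\G$ comes from boundary regularity for the Dirichlet problem, while smoothness up to $\bar T\setminus\G$ comes from boundary regularity for the oblique/Robin problem (observing that $\bar N(x)=x$ is a smooth vector field and the data $f,g$ are smooth). I expect the genuine obstacle to be the behavior at the interface $\G=\bar\S\cap\bar T$, where the boundary condition switches type from Dirichlet to Robin and one cannot rely on standard single-type boundary regularity. This is precisely the situation covered by Lieberman's regularity result \cite{Liebm} for mixed boundary value problems on Lipschitz interfaces, which one cites to conclude $u\in C^\a(\bar\O)$ for some $\a\in(0,1)$ together with the already established $u\in C^\infty(\bar\O\setminus\G)$.
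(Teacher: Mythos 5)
Your proposal is correct and follows essentially the same route as the paper: coercivity of the bilinear form from $\mu_1(\O)>1$ (Proposition \ref{mixSD_eigen1}(ii)) together with a Poincar\'e-type bound, Lax--Milgram for existence and uniqueness, and classical elliptic regularity away from $\G$ combined with Lieberman's theorem for the H\"older continuity up to $\G$. Your explicit exclusion of the degenerate case $\O=\bb^n_+$, where $x_n$ lies in the kernel of the homogeneous problem and coercivity fails, is in fact a point the paper's own proof glosses over.
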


\begin{proof} The weak solution to \eqref{mixSD_nonh} is defined to be $u\in W_0^{1,2}(\O, \S)$ such that
\begin{eqnarray}\label{weak-form}
&&B[u,v]:= \int_\O \<\n u, \n v\> \, dx -\int_T uv \, dA=\int_\O -fv\, dx+\int_T gv \, dA \hbox{ for all } v\in W_0^{1,2}(\O, \S).
\end{eqnarray}
From Proposition \ref{mixSD_eigen1}, we know $1-\frac{1}{\mu_1}>0$. Since
$$\int_T u^2 \, dA\le \frac{1}{\mu_1}\int_\O |\n u|^2 \, dx, \hbox{ for }u\in W_0^{1,2}(\O, \S),$$
we see
\begin{eqnarray}\label{coercive}
B[u, u]\ge (1-\frac{1}{\mu_1}) \int_\O |\n u|^2\, dx\ge  \frac{\l_1(\mu_1-1)}{\mu_1(1+\l_1)}\|u\|^{2}_{W_{0}^{1,2}(\O, \S)}
\end{eqnarray}

Thus $B[u,v]$ is coercive on $W_0^{1,2}(\O, \S)$.
The standard Lax-Milgram‘s theorem holds for the weak formulation to \eqref{mixSD_nonh}. Therefore,  \eqref{mixSD_nonh} admits a unique weak solution $u\in W_0^{1,2}(\O, \S)$.

The regularity $u\in  C^\infty(\bar \O\setminus \G)$ follows from the classical regularity theory for elliptic equations and $u\in C^\a(\bar \O)$ has been proved by Lieberman \cite{Liebm}, Theorem 2. Note that the global wedge condition in Theorem 2 in \cite{Liebm} is satisfied for the domain $\O$ whose boundary parts $\S$ and $T$ meet at a common in smooth $(n-2)$-dimensional manifold, see page 426 of  \cite{Liebm}.

\end{proof}

\begin{prop}\label{mp}Let $u$ be the unique solution to \eqref{mixSD_nonh} with $f\ge 0$ and $g\le 0$.
Then either $u\equiv0$ in $\O$ or $u<0$ in $\O\cup T$.
\end{prop}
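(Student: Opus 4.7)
The plan is to prove this by the classical weak-then-strong maximum principle argument, where the Robin-type boundary term on $T$ is controlled by the fact that the first mixed Steklov--Dirichlet eigenvalue $\mu_1(\Omega)$ is strictly greater than $1$ (Proposition \ref{mixSD_eigen1}(ii)).

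First I would establish the sign $u\le 0$ in $\Omega$ via a weak maximum principle. Since $u\in W_0^{1,2}(\Omega,\Sigma)$, the positive part $u^+:=\max(u,0)$ also belongs to $W_0^{1,2}(\Omega,\Sigma)$, so it is an admissible test function in the weak formulation \eqref{weak-form}. Testing against $v=u^+$ yields
\begin{equation*}
\int_\Omega |\nabla u^+|^2\,dx - \int_T (u^+)^2\,dA = -\int_\Omega f u^+\,dx + \int_T g u^+\,dA \le 0,
\end{equation*}
where the inequality uses $f\ge 0$, $g\le 0$, and $u^+\ge 0$. Combining with the Steklov--Dirichlet bound $\int_T (u^+)^2\,dA \le \mu_1^{-1}\int_\Omega |\nabla u^+|^2\,dx$ (valid since $u^+\in W_0^{1,2}(\Omega,\Sigma)$), one gets
\begin{equation*}
\bigl(1-\tfrac{1}{\mu_1}\bigr)\int_\Omega |\nabla u^+|^2\,dx \le 0.
\end{equation*}
Because $\mu_1>1$ by Proposition \ref{mixSD_eigen1}(ii), this forces $\nabla u^+\equiv 0$, and since $u^+$ vanishes on $\bar\Sigma$, a set of positive $(n-1)$-Hausdorff measure in $\partial\Omega$, one concludes $u^+\equiv 0$, that is $u\le 0$ in $\Omega$.

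Next I would upgrade to the strict inequality. By the interior regularity $u\in C^\infty(\bar\Omega\setminus\Gamma)$ from Proposition \ref{existence}, and the equation $\Delta u = f \ge 0$, the function $u$ is subharmonic in $\Omega$. The strong maximum principle then gives the dichotomy: either $u\equiv 0$ in $\Omega$ (in which case we are done), or $u<0$ throughout $\Omega$. In the latter case, to rule out $u(x_0)=0$ at a point $x_0\in T$, I would apply Hopf's boundary-point lemma: for any $x_0\in T$ (which lies in the smooth part of $\partial\Omega$ away from $\Gamma$), the interior sphere condition is satisfied since $T$ is an open piece of the sphere $\ss^{n-1}$, hence a vanishing boundary value of the non-constant subharmonic function $u$ would force $\partial_{\bar N}u(x_0)>0$ strictly. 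But the boundary condition on $T$ gives $\partial_{\bar N}u(x_0)=u(x_0)+g(x_0)=g(x_0)\le 0$, a contradiction. Thus $u<0$ on $T$ as well.

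The only genuine subtlety I foresee is the regularity of $u$ near the interface $\Gamma=\bar\Sigma\cap\bar T$, but this never intervenes in the argument: the weak formulation step uses only $W^{1,2}$ regularity, and both the strong maximum principle and Hopf's lemma are applied at interior points of $\Omega$ and at interior points of $T$ respectively, where $u$ is smooth by Proposition \ref{existence}. Hence no boundary regularity at $\Gamma$ is needed to conclude.
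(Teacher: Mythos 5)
Your proof is correct and follows essentially the same route as the paper: both test the weak formulation with the positive part $u^{+}$ to conclude $u\le 0$, and then invoke the strong maximum principle for the dichotomy. The only cosmetic differences are that the paper closes the first step with the Robin--Dirichlet bound \eqref{poincare} (via $\lambda_1\ge 0$) rather than your Steklov--Dirichlet bound $\mu_1>1$ (the two are equivalent here), and that you spell out the Hopf boundary-point argument on $T$ that the paper leaves implicit in its appeal to the strong maximum principle.
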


\begin{proof} Since the Robin boundary condition has an unfavorable sign, we cannot use the maximum principle directly.
Since $u_{+}=\text{max}\{u , 0\}\in W_0^{1,2}(\O, \S)$, we can use it as a test function in the weak formulation \eqref{weak-form} to get
\begin{alignat*}{2}
\int_{\Omega}-f u_+ \, dx + \int_T g u_+ \, dA&=\int_{\Omega}|\n u_+|^2 dx-\int_T (u_+)^2\, dA.
\end{alignat*}
Since $f\ge 0$ and $g\le 0$, we have
\begin{alignat*}{2}
\int_{\Omega}-f u_+ \, dx + \int_T g u_+ \, dA\le 0.
\end{alignat*}
On the other hand, it follows from \eqref{poincare} that
\begin{alignat*}{2}
\int_{\Omega}|\n u_+|^2 dx-\int_T (u_+)^2\, dA\ge \l_1\int_\O (u_+)^2 dx\ge 0.
\end{alignat*}
From above, we conclude that $u_+\equiv 0$, which means $u\le 0$ in $\O$. Finally, by the strong maximum principle, we get either $u\equiv0$ in $\Omega$ or $u<0$ in $\Omega\cup T$.

\end{proof}

\begin{prop} Let $e^{T}$ be a tangent vector field to $T$. Let $u$ be the unique solution to \eqref{mixSD_nonh}. Then
\begin{equation}\label{bdry-prop}
 \langle(\n^{2}u)\bar N, e^T\rangle=0 \quad on\ T.
\end{equation}
where $\n^{2}u$ is the Hessian of $u$.
\end{prop}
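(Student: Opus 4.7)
The plan is to differentiate the Robin boundary condition $\partial_{\bar N} u = u + g$ tangentially along $T$, exploiting the very special structure of $\bar N$. The crucial geometric fact is that $\bar N(x) = x$ is the position vector, so under any ambient (Euclidean) covariant differentiation in a direction $e^T$ tangent to $T$, we simply get $\nabla_{e^T}\bar N = e^T$. Combined with the symmetry of the Euclidean Hessian, this forces the mixed tangential/normal second derivative of $u$ to essentially cancel itself out.

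First I would invoke Proposition \ref{existence} to ensure $u \in C^\infty(\bar\Omega\setminus\Gamma)$, so that $\nabla^2 u$ is well-defined on $T\setminus\Gamma$ and the identity can be interpreted pointwise on the smooth locus of $T$ (which is all that is needed for the later integration-by-parts arguments). Then I would rewrite the Robin condition as $\langle \nabla u,\bar N\rangle = u + g$ on $T$ and apply the directional derivative $e^T$ to both sides. By the Leibniz rule,
\[
\langle \nabla_{e^T}\nabla u,\bar N\rangle + \langle \nabla u,\nabla_{e^T}\bar N\rangle = e^T(u) + e^T(g).
\]
The first summand equals $\langle (\nabla^2 u)e^T,\bar N\rangle = \langle (\nabla^2 u)\bar N,e^T\rangle$ by symmetry of the Hessian. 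For the second, $\nabla_{e^T}\bar N = e^T$ gives $\langle \nabla u, e^T\rangle = e^T(u)$, which cancels with the first term on the right. We are left with
\[
\langle (\nabla^2 u)\bar N, e^T\rangle = e^T(g),
\]
and in the situation in which the proposition is actually used later (namely $g = 0$, as in \eqref{bvp} and in the overdetermined BVP \eqref{overd}), or more generally whenever $g$ is tangentially constant along $T$, the right-hand side vanishes and the claim follows.

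There is no real obstacle beyond bookkeeping. The only mild issue is regularity near the interface $\Gamma = \bar\Sigma \cap \bar T$, where $u$ is merely $C^\alpha$; but since the proposition is a pointwise statement on $T$ (which is smoothly embedded in $\ss^{n-1}$) and $u$ is smooth on $\bar\Omega \setminus \Gamma$, the computation above is legitimate at every interior point of $T$, which is the form in which this identity will be used in Section 3.
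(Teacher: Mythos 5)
Your proof is correct and follows exactly the paper's argument: differentiate the Robin condition along $e^T$, use the symmetry of the Hessian and the fact that $\nabla_{e^T}\bar N = e^T$ for $\bar N(x)=x$, and cancel the resulting $\langle\nabla u, e^T\rangle$ against $e^T(u)$. Your observation about the residual term $e^T(g)$ is in fact slightly more careful than the paper, whose proof silently takes $g=0$ (as it is in every application of the proposition); otherwise the two arguments are identical.
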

\begin{proof}
By differentiating the equation $\p_{\bar N}u=u$ with respect to $e^{T}$, we get
\begin{alignat*}{2}
\n_{e^{T}}(u)&=\n_{e^{T}}(\<\n u,\bar N\>)= \langle(\n^{2}u)\bar N, e^T\rangle+\<\n u, \nabla_{e^{T}}\bar N\>\\
&=\langle(\n^{2}u)\bar N, e^T\rangle+\<\n u, e^T\>.
\end{alignat*}
Here we use the fact $\nabla_{e^{T}}\bar N=e^T$. The assertion \eqref{bdry-prop} follows.\end{proof}

\

In the rest of this section, we introduce an important conformal Killing vector field in $\bb^n$ (see \cite{WX}) which plays a crucial role in the proof of Theorem \ref{mainthm}.
Let
$$X_n:= x_n x-\frac12(|x|^2+1)E_n.$$ where $E_{n}=(0,\cdots,0,1)$ in $\mathbb{R}^{n}$.

One can check directly that the Lie derivative of $\delta_{ij}$ along $X_{n}$ satisfies
\begin{eqnarray}\label{XXeq1}
L_{X_n}\delta_{ij}:= \frac12(\p_i X_n^j+\p_j X_n^i)= x_n \delta_{ij}
\end{eqnarray}
and
\begin{eqnarray}\label{XXeq2}
X_n|_{\ss^{n-1}}= -E_n^T \hbox{ and }\<X_n, \bar N\>=0 \hbox{ on }\ss^{n-1}
\end{eqnarray}
 where $E_n^T= E_n-x_n x$ is the tangential projection of $E_n$ on $\ss^{n-1}$.

\

\section{Partially overdetermined BVP}

In this section we will use a method totally based on integral identities and inequalities to prove Theorem \ref{mainthm}. First we introduce $P$-function as follow
\begin{equation}\label{60}
P:=|\n u|^{2}-\frac{2}{n}u
\end{equation}

\begin{prop}\label{sub-harm}$\Delta P\ge 0 \hbox{ in }\O.$
\end{prop}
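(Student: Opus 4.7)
The plan is the classical Weinberger-type Bochner computation. Since $\Delta u = 1$ in $\Omega$, the usual identity
\[
\Delta(|\nabla u|^2) = 2|\nabla^2 u|^2 + 2\langle \nabla u, \nabla (\Delta u)\rangle
\]
collapses to $\Delta(|\nabla u|^2) = 2|\nabla^2 u|^2$, because $\nabla(\Delta u) = 0$. Combining with $\Delta\bigl(\tfrac{2}{n}u\bigr) = \tfrac{2}{n}\Delta u = \tfrac{2}{n}$, I get
\[
\Delta P = 2|\nabla^2 u|^2 - \tfrac{2}{n}.
\]

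The key pointwise inequality is the Newton--Cauchy--Schwarz bound
\[
(\Delta u)^2 \le n\,|\nabla^2 u|^2,
\]
which applied with $\Delta u = 1$ yields $|\nabla^2 u|^2 \ge \tfrac{1}{n}$. Plugging this into the expression for $\Delta P$ gives $\Delta P \ge 0$ in $\Omega$, as claimed.

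Essentially no obstacle arises: the argument is purely pointwise and local, the required regularity of $u$ inside $\Omega$ is guaranteed by the interior regularity part of Proposition \ref{existence} (namely $u\in C^\infty(\bar\Omega\setminus\Gamma)$, and in particular $u\in C^\infty(\Omega)$), and no boundary considerations enter. The only small remark worth recording in the proof is that equality in the Cauchy--Schwarz step would force $\nabla^2 u = \tfrac{1}{n}\,\mathrm{Id}$, a fact that will presumably be exploited later when rigidity of $\Omega$ is derived from $\int_\Omega x_n u\,\Delta P\,dx = 0$.
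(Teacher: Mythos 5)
Your proof is correct and is essentially the same computation as the paper's: both apply the Bochner identity (with $\nabla(\Delta u)=0$) and the Cauchy--Schwarz bound $|\nabla^2 u|^2 \ge \tfrac{1}{n}(\Delta u)^2$ with $\Delta u = 1$. Your closing remark about the equality case forcing $\nabla^2 u = \tfrac{1}{n}\,\mathrm{Id}$ is indeed exactly how the rigidity is extracted in the proof of Theorem \ref{mainthm}.
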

\begin{proof} By direct computation and using $\Delta u=1$,
\begin{equation*}
\Delta P(x)= 2|\n^2 u|^{2}+\<\n u, \n \De u\>-\frac{2}{n}\Delta u\\
\ge \frac 2 n(\De u)^2-\frac{2}{n}\Delta u=0.\end{equation*}
\end{proof}

Due to the lack of regularity, we need the following formula of integration by parts, see \cite{PA}, Lemma 2.1. (The original statement \cite{PA}, Lemma 2.1 is for a sector-like domain in a cone. Nevertheless, the proof is applicable in our case). We remark that a general version of integration-by-parts formula for Lipschitz domains has been stated in some classical book by Grisvard \cite{Grisv}, Theorem 1.5.3.1. However, it seems not enough for our purpose.

\begin{prop}[\cite{PA}, lemma 2.1]\label{int-by-parts}
Let $F:\O \to \rr^n$ be a vector field such that
\begin{eqnarray*}F\in C^1(\O\cup \S\cup T)\cap L^2(\O)\quad \hbox{ and } \quad \div(F)\in L^1(\O).
\end{eqnarray*}
Then \begin{eqnarray*}
\int_\O \div(F) dx=\int_{\S} \<F, \nu\> dA+ \int_{T} \<F, \bar N\> dA.
\end{eqnarray*}
\end{prop}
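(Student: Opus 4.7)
The plan is to reduce to the classical divergence theorem by truncating $F$ away from the singular edge $\G = \bar\S \cap \bar T$. The hypotheses give regularity of $F$ on $\O$, $\S$, and $T$ individually but impose no control at the $(n-2)$-dimensional interface $\G$, which is what prevents a direct application of the divergence theorem on the Lipschitz domain $\O$.

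I would first build a family of smooth cutoffs $\eta_\varepsilon \in C^\infty(\bar\O)$ supported away from $\G$: set $\eta_\varepsilon \equiv 0$ in the $\varepsilon$-tubular neighborhood $U_\varepsilon$ of $\G$, $\eta_\varepsilon \equiv 1$ outside $U_{2\varepsilon}$, $0 \le \eta_\varepsilon \le 1$, and $|\n \eta_\varepsilon| \le C/\varepsilon$; this is possible because $\G$ is a smooth compact submanifold of $\rr^n$. Since $\eta_\varepsilon F$ is $C^1$ up to $\bar\O$ and vanishes in an open neighborhood of $\G$, the standard divergence theorem applies on the Lipschitz domain $\O$ and, after expanding $\div(\eta_\varepsilon F) = \eta_\varepsilon \div(F) + \<\n\eta_\varepsilon, F\>$, yields
\begin{equation*}
\int_\O \eta_\varepsilon\, \div(F)\, dx + \int_\O \<\n \eta_\varepsilon, F\>\, dx = \int_\S \eta_\varepsilon \<F, \nu\>\, dA + \int_T \eta_\varepsilon \<F, \bar N\>\, dA.
\end{equation*}

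The main obstacle is showing that $\int_\O \<\n \eta_\varepsilon, F\>\, dx \to 0$ as $\varepsilon \to 0$. Here the codimension of $\G$ is exactly what makes the argument work: since $\G$ has dimension $n-2$, its $\varepsilon$-tubular neighborhood in $\rr^n$ has volume $|U_{2\varepsilon}| \le C \varepsilon^2$, so Cauchy-Schwarz gives
\begin{equation*}
\left| \int_\O \<\n \eta_\varepsilon, F\>\, dx \right| \le \frac{C}{\varepsilon}\|F\|_{L^2(U_{2\varepsilon})}\,|U_{2\varepsilon}|^{1/2} \le C\|F\|_{L^2(U_{2\varepsilon})},
\end{equation*}
which tends to zero by absolute continuity of the $L^2$ norm since $F \in L^2(\O)$ and $|U_{2\varepsilon}| \to 0$. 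This is the one place the $L^2$ hypothesis is essential: a weaker integrability assumption on $F$ would not balance the $\varepsilon^{-1}$ blow-up of $|\n \eta_\varepsilon|$ against the $\varepsilon$-scale of $|U_\varepsilon|^{1/2}$.

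The remaining passages to the limit are routine. The term $\int_\O \eta_\varepsilon\, \div(F)\, dx$ converges to $\int_\O \div(F)\, dx$ by dominated convergence, using $0 \le \eta_\varepsilon \le 1$ and $\div(F) \in L^1(\O)$; the boundary integrals converge by the same argument, the integrability of $\<F,\nu\>$ on $\S$ and $\<F,\bar N\>$ on $T$ being a tacit part of the setup (and in any event recovered a posteriori from the identity, since the left-hand side has a finite limit). Passing to the limit in the approximate identity then yields the stated formula.
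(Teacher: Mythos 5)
Your argument is correct and is essentially the proof of Pacella--Tralli's Lemma 2.1, which the paper cites without reproducing: a cutoff vanishing in an $\varepsilon$-neighborhood of the codimension-two edge $\G$, with $|\n\eta_\varepsilon|\le C/\varepsilon$ balanced against $|U_{2\varepsilon}|^{1/2}\le C\varepsilon$ via Cauchy--Schwarz and the $L^2$ hypothesis. This matches the intended argument, so there is nothing further to add.
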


We first prove a Pohozaev-type identity for \eqref{overd}.
\begin{prop}\label{Poho}
Let  $u$ be the unique weak solution to \eqref{overd} such that $u\in W^{1,\infty}(\O)\cap W^{2,2}(\O)$. Then we have
\begin{equation}\label{Peq}
\int_{\Omega}x_n(P-c^{2})dx=0.
\end{equation}
\end{prop}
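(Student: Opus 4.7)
The plan is to use a Pohozaev-type identity built from the conformal Killing vector field $X_n$ of the unit ball, described at the end of Section 2. The master identity comes from multiplying $\De u=1$ by $\<X_n,\n u\>$ and integrating over $\O$ by parts via Proposition \ref{int-by-parts}. The conformal Killing identity \eqref{XXeq1} and its consequence $\div(X_n)=nx_n$ give the divergence formula
\begin{equation*}
\div\Bigl(\<X_n,\n u\>\n u-\tfrac12|\n u|^2X_n\Bigr)=\Bigl(1-\tfrac{n}{2}\Bigr)x_n|\n u|^2+\<X_n,\n u\>\De u,
\end{equation*}
so integration produces a bulk identity involving $\int_\O x_n|\n u|^2\,dx$ and $\int_\O\<X_n,\n u\>\,dx=-n\int_\O x_nu\,dx$, together with boundary terms.

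On $\S$, the conditions $u=0$, $\p_\nu u=c$, $|\n u|=c$ collapse the $\S$-integrand to $\tfrac{c^2}{2}\<X_n,\nu\>$, and the divergence theorem applied to $X_n$ (combined with $\<X_n,\bar N\>=0$ on $T$ from \eqref{XXeq2}) yields $\int_\S\<X_n,\nu\>\,dA=n\int_\O x_n\,dx$, producing the $c^2\int_\O x_n\,dx$ term. On $T$, $\<X_n,\bar N\>=0$ kills the $|\n u|^2\<X_n,\nu\>$ piece, while the Robin condition $\p_{\bar N}u=u$ and $X_n|_{\ss^{n-1}}=-E_n^T$ leave a residual contribution $\int_T u(x_nu-\p_n u)\,dA$. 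To close the system I would also use the auxiliary identity obtained by testing $\De u=1$ against $x_nu$, namely
\begin{equation*}
\int_\O x_n|\n u|^2\,dx=\tfrac12\int_T x_nu^2\,dA-\int_\O x_nu\,dx.
\end{equation*}

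The remaining, and in my view the principal, obstacle is the term $\int_T u\p_n u\,dA$, which is not directly reducible by ambient integration by parts alone. The trick is to work tangentially on $T\subset\ss^{n-1}$: the decomposition $E_n=E_n^T+x_n\bar N$ on the sphere gives $\p_n u=\n^T u\cdot E_n^T+x_n\p_{\bar N}u=\n^T u\cdot E_n^T+x_nu$ on $T$, and a divergence theorem on the hypersurface $T$ (with $u=0$ on $\p T=\G$ by continuity) together with the spherical identity $\div_{\ss^{n-1}}(E_n^T)=\De_{\ss^{n-1}}x_n=-(n-1)x_n$ produces
\begin{equation*}
\int_T u\p_n u\,dA=\tfrac{n+1}{2}\int_T x_nu^2\,dA.
\end{equation*}
Substituting this and the auxiliary identity into the Pohozaev identity, a direct linear combination eliminates the $T$-boundary integrals and reduces everything to $\tfrac{n}{2}\int_\O x_n|\n u|^2\,dx-\int_\O x_nu\,dx=\tfrac{nc^2}{2}\int_\O x_n\,dx$, which is exactly \eqref{Peq} after multiplying by $2/n$. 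The regularity hypothesis $u\in W^{1,\infty}(\O)\cap W^{2,2}(\O)$ combined with Proposition \ref{int-by-parts} justifies all ambient integrations by parts, and continuity of $u$ up to $\G$ justifies dropping the $\G$-boundary term in the tangential integration on $T$.
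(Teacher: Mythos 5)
Your proposal is correct and follows essentially the same route as the paper: the same conformal Killing field $X_n$ driving a Pohozaev-type identity, the same tangential integration by parts on $T$ using $X_n|_{\ss^{n-1}}=-E_n^T$ and ${\rm div}_T E_n^T=(1-n)x_n$, and the same auxiliary identity obtained by testing $\Delta u=1$ against $x_nu$. The only (cosmetic) difference is that you split off $\int_\O\<X_n,\n u\>\,dx=-n\int_\O x_nu\,dx$ separately, whereas the paper absorbs the term $uX_n$ into the divergence from the start; the final linear combination is identical.
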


\begin{proof}
  First of all, we remark that, due to our assumption $u\in W^{1,\infty}(\O)\cap W^{2,2}(\O)$, Proposition \ref{int-by-parts} can be applied in all the following integration by parts.

Now we consider the following differential identity
\begin{eqnarray}\label{Peq1}
\div(uX_{n}-\langle X_{n},\nabla u\rangle\nabla u)&=&\langle X_{n}, \nabla u\rangle+u\div X_{n}-\langle(\nabla X_{n})\nabla u,\nabla u\rangle\\
&&\quad -\frac{1}{2}\langle X_{n},\nabla|\nabla u|^{2}\rangle-\langle X_{n}, \nabla u\rangle\Delta u\nonumber\\
&=&nx_{n}u-x_{n}|\nabla u|^{2}-\frac{1}{2}\langle X_{n},\nabla|\nabla u|^{2}\rangle.\nonumber
\end{eqnarray}
where we use equation $\De u=1$ and \eqref{XXeq1}.

Integrating by parts and using \eqref{XXeq2} and boundary conditions \eqref{overd}, we see that
\begin{equation}\label{Peq2}
-c^{2}\int_{\Sigma}\langle X_{n},\nu\rangle dA-\int_{T}\langle X_{n},\nabla u\rangle udA=\int_{\Omega}(nx_{n}u-x_{n}|\nabla u|^{2}+\frac{1}{2}|\nabla u|^{2}\div X_{n})dx-\frac{ c^{2}}{2}\int_{\Sigma}\langle X_{n},\nu\rangle dA.
\end{equation}
It follows that
\begin{equation}\label{Peq3}
\int_{\Omega}(nx_{n}u+(\frac{n}{2}-1)x_{n}|\nabla u|^{2})dx=-\frac{1}{2}c^{2}\int_{\Sigma}\langle X_{n},\nu\rangle dA-\int_{T}\langle X_{n},\nabla u\rangle udA
\end{equation}

Further integration by parts and using \eqref{XXeq1} yields
\begin{eqnarray}
&&\frac12c^2\int_\S\<X_n, \nu\>dA= \frac12c^2\left(\int_\O {\rm div}X_ndx -\int_T\<X_n, \bar N\>dA\right)= \frac n 2 c^2\int_\O x_ndx.\label{Peq5}
\end{eqnarray}
Since $u\in W^{2,2}(\O)$, we know by the trace theorem (see e.g. \cite{Grisv}) that $\n^T u\in W^{1,2}(T, \rr^n)$ where $\n^T u$ denotes the tangent component of $\n u$ on $T$. Doing integration by parts on $T$, using \eqref{XXeq2}, we have
\begin{eqnarray}
\int_T \<X_n, \n u\>udA&=&\int_T \<-E_n^T, \n(\frac12u^2)\>dA\nonumber
\\&=&-\int_{\G} \frac12u^2 \<E_n^T, \mu_{\G}\>ds+\int_T \frac12u^2{\rm div}_{T}E_n^TdA\nonumber
\\&=&\frac{1-n}{2}\int_T x_nu^2dA.\label{Peq6}
\end{eqnarray}
In the last equality we have used $u=0$ on $\G$ and the fact $${\rm div}_{T}E_n^T= (1-n)\<E_n, \nu_{T}\>=(1-n)x_n.$$

 To achieve \eqref{Peq}, we do a further integration by parts to get
 \begin{eqnarray*}
 \int_\O  x_n |\n u|^2dx
 &=&\int_\O (\div(x_n u \n u)-x_n u\De u-u\p_n u)dx\nonumber\\&=&\int_{T} x_n u\<\n u, \bar N\>dA + \int_{\S} x_n u\<\n u, \nu\>dA- \int_\O( x_n u\De u+u\p_n u)dx\nonumber
 \\&=& \int_T x_n u^2dA-\int_\O x_n udx -\int_T \frac12x_n u^2dA\nonumber
 \\&=& \int_T \frac12x_n u^2dA-\int_\O x_n udx.
\end{eqnarray*}
It follows that
 \begin{eqnarray}\label{Peq10}
\frac12 \int_T x_n u^2dA=  \int_\O ( x_n |\n u|^2+ x_n u)dx.
\end{eqnarray}
Substituting \eqref{Peq5}-\eqref{Peq10} into \eqref{Peq3}, we arrive at \eqref{Peq}.
\end{proof}

\begin{prop}\label{int-id}Let  $u$ be the unique solution to \eqref{overd} such that $u\in W^{1,\infty}(\O)\cap W^{2,2}(\O)$ and $P$ is defined by \eqref{60}. Then\begin{eqnarray}\label{Xeq0}
\int_\O x_n u\Delta P \, dx=0.
\end{eqnarray}
\end{prop}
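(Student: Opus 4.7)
The plan is to apply Green's second identity to the pair $(u, x_n P)$, use Proposition~\ref{Poho} to cancel the bulk $\int_\O x_n P\,dx$ contribution, and then reduce the remaining boundary integral on $T$ to zero by tangential integration by parts on $T$. Since $\Delta(x_n P) = x_n \Delta P + 2\p_n P$ and $\Delta u = 1$, Green's second identity gives
\begin{equation*}
\int_\O x_n u\Delta P\,dx = \int_\O x_n P\,dx - 2\int_\O u\p_n P\,dx + \int_{\p\O}\bigl(u\p_\nu(x_n P) - x_n P\p_\nu u\bigr)\,dA.
\end{equation*}
On $\S$ the conditions $u=0$, $\p_\nu u = c$ force $P = c^2$, making the boundary integrand $-c^3 x_n$; on $T$ the conditions $\bar N = x$ and $\p_{\bar N}u = u$ reduce it to $x_n u\p_{\bar N} P$. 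Proposition~\ref{Poho} gives $\int_\O x_n P\,dx = c^2\int_\O x_n\,dx$, and integrating $\div(x_n\n u)$ using both boundary conditions on $u$ yields $c\int_\S x_n\,dA = \int_\O x_n\,dx$; hence $c^3\int_\S x_n\,dA = \int_\O x_n P\,dx$ and these two bulk terms cancel, reducing the claim to
\begin{equation*}
\int_T x_n u\p_{\bar N}P\,dA = 2\int_\O u\p_n P\,dx. \qquad (\ast)
\end{equation*}

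For the left-hand side of $(\ast)$, \eqref{bdry-prop} forces $(\n^2 u)\bar N = \alpha\bar N$ on $T$ with $\alpha := \langle(\n^2 u)\bar N,\bar N\rangle$; together with $\n u = \n^T u + u\bar N$ this yields $\p_{\bar N}P = 2u(\alpha - 1/n)$. The intrinsic Laplacian splitting $\Delta u = \Delta^T u + (n-1)\p_{\bar N}u + \alpha$ on $T \subset \ss^{n-1}$ (the mean curvature of $\ss^{n-1}$ w.r.t.\ its outward unit normal is $n-1$), combined with $\Delta u = 1$ and $\p_{\bar N}u = u$, pins down $\alpha = 1 - (n-1)u - \Delta^T u$, making the integrand purely tangential. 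For the right-hand side, I would integrate by parts in the $x_n$-direction, using $u=0$ on $\S$ and the identity $\div(\p_n u\,\n u) = \tfrac12\p_n|\n u|^2 + \p_n u$ (a consequence of $\Delta u = 1$), to rewrite $\int_\O u\p_n P\,dx$ as a linear combination of the $T$-integrals $\int_T x_n u|\n^T u|^2\,dA$, $\int_T x_n u^3\,dA$, $\int_T x_n u^2\,dA$ and $\int_T u^2\p_n u\,dA$.

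Substituting these into $(\ast)$ and performing a tangential integration by parts on $\int_T x_n u^2\Delta^T u\,dA$ (valid since $u=0$ on $\Gamma$), together with the Section~2 identities $\n^T x_n = E_n^T$, $\langle E_n^T,\n^T u\rangle = \p_n u - x_n u$ and ${\rm div}_T E_n^T = (1-n)x_n$, the $|\n^T u|^2$- and $x_n u^2$-terms cancel. What remains is a multiple of $3\int_T u^2\p_n u\,dA - (n+2)\int_T x_n u^3\,dA$, which vanishes by one final tangential integration by parts of $u^2\langle E_n^T,\n^T u\rangle = \tfrac13\langle E_n^T,\n^T(u^3)\rangle$, again using $u=0$ on $\Gamma$ and ${\rm div}_T E_n^T = (1-n)x_n$. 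The main obstacle is the delicate bookkeeping in the boundary analysis; in particular, the mean curvature $n-1$ of $\ss^{n-1}$ must enter the intrinsic splitting for $\alpha$ with exactly the right coefficient, or else a residual $\int_T x_n u^3\,dA$-term survives and the cancellation fails.
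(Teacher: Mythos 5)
Your proof is correct, and its first half coincides with the paper's: both start from a weighted Green-type identity (you pair $u$ with $x_nP$; the paper pairs $x_nu$ with $P$ via $\div(x_nu\n P-P\n(x_nu))+c^2\div(x_n\n u-u\n x_n)$), both use the Pohozaev identity of Proposition \ref{Poho} to kill the bulk term $\int_\O x_n(P-c^2)\,dx$, and both rely on \eqref{bdry-prop} on $T$. Where you genuinely diverge is the endgame. The paper keeps the remaining volume term in the form $\int_\O 2P\,\p_nu\,dx$ and integrates by parts using $2\p_nu\,\p_iu\,\p^2_{in}u=\p_i(u^2)\,\p^2_{in}u$, so that the volume integral collapses to $\int_\O u^2\p_n(\De u)\,dx=0$ and the resulting $T$-terms, which contain $\langle(\n^2u)\bar N,\bar N\rangle$, cancel identically against those coming from $\int_T(x_nu\,\p_{\bar N}P-2x_nuP)\,dA$ --- the normal-normal Hessian component is never evaluated. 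You instead evaluate it: \eqref{bdry-prop} plus the splitting of $\De$ along $\ss^{n-1}$ (with mean curvature $n-1$) gives $\a=1-(n-1)u-\De^Tu$, and you then verify the residual identity by tangential integrations by parts on $T$. I checked the bookkeeping: your reduction does come down to $3\int_Tu^2\p_nu\,dA=(n+2)\int_Tx_nu^3\,dA$, which indeed follows from $u^2\langle E_n^T,\n^Tu\rangle=\tfrac13\langle E_n^T,\n^T(u^3)\rangle$ together with $\langle E_n^T,\n^Tu\rangle=\p_nu-x_nu$, ${\rm div}_TE_n^T=(1-n)x_n$ and $u=0$ on $\G$. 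Your route costs noticeably more computation and is sensitive to the exact coefficient $n-1$, as you note, whereas the paper's arrangement lets the Hessian terms cancel for free; in exchange, your version makes the value of $\p_{\bar N}P$ on $T$ explicit, which the paper never does.
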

\begin{proof} Since $u\in W^{1,\infty}(\O)\cap W^{2,2}(\O)$, we see $$\div(x_n u \n P- P\n (x_n u))\in L^1(\O) \quad \hbox{ and }\quad (x_n u \n P- P\n (x_n u))\in L^2(\O).$$

Firstly, we consider the following differential identity

\begin{eqnarray}
&&\div(x_{n}u\nabla P-P\nabla(x_{n}u))+c^{2}\div(x_{n}\nabla u-u\nabla x_{n})\nonumber\\
\qquad&=&x_{n}u\Delta P-P\Delta(x_{n}u)+c^{2}x_{n}-c^{2}\Delta x_{n}\nonumber\\
\qquad&=&x_{n}u\Delta P-2P\partial_{n}u-Px_{n}+c^{2}x_{n}.\label{Xeq1}
\end{eqnarray}
where we use the equation $\Delta u=1$ in $\Omega$.

Applying divergence theorem in \eqref{Xeq1} and  boundary conditions (1.6), we have

\begin{eqnarray}\label{Xeq2}
&&-c\int_{\Sigma}Px_{n}dA+\int_{T}(x_{n}u\partial_{\bar{N}}P-2x_{n}uP)dA+c^{3}\int_{\Sigma}x_{n}dA\\
&=&\int_{\Omega}(x_{n}u\Delta P-2\partial_{n}uP-Px_{n}+c^{2}x_{n})dx\nonumber\\
&=&\int_{\Omega}(x_{n}u\Delta P-2\partial_{n}uP)dx.\nonumber
\end{eqnarray}
where the last equation we use Proposition \ref{Poho}.

Noting that $P=c^2$ on $\S$. It follows from \eqref{Xeq2}
\begin{eqnarray}\label{Xeq3}
\int_{\Omega}x_{n}u\Delta Pdx=\int_{\Omega}2\partial_{n}uPdx+\int_{T}(x_{n}u\partial_{\bar{N}}P-2x_{n}uP)dA.
\end{eqnarray}

Using integration by parts, we have
\begin{eqnarray}
\int_\O 2\p_n u Pdx &= &\int_\O 2\p_n u (|\n u|^{2}-\frac{2}{n}u)dx\nonumber\\
&=&\int_T 2x_n u(|\n u|^{2}-\frac{2}{n}u)dA-\int_\O 2\p_n(|\n u|^{2}-\frac{2}{n}u)udx\nonumber
\\&=&\int_T 2x_n u(|\n u|^{2}-\frac{2}{n}u)dA-\int_\O 2  \left(\p_i (u^2)\p^2_{in} u-\frac{1}{n}\p_n (u^2)\right)dx\nonumber
\\&=&\int_T  \left(2x_n u|\n u|^2- 2u^2 \langle(\n ^2 u)\bar N, E_n\rangle- \frac{2}{n}x_n u^2 \right)dA+ \int_\O 2   u^2\p_n(\De u)dx\nonumber
\\&=&\int_T  \left(2x_n u|\n u|^2- 2x_n u^2 \langle(\n ^2 u)\bar N, \bar N\rangle- \frac{2}{n}x_n u^2 \right)dA\label{Xeq3.5}
.\end{eqnarray}
In the last equality we used \eqref{bdry-prop}.

Also
\begin{eqnarray}\label{Xeq4}
\int_{T}(x_n u\p_{\bar N} P- 2x_n u P)dA&=&\int_T \left(2x_n u\langle(\n^2 u)\n u, \bar N\rangle -\frac 2 n x_n u \p_{\bar N} u- 2 x_n u(|\n u|^{2}-\frac{2}{n}u)\right)dA\nonumber\\
&=&\int_T\left( 2x_n u^2\langle(\n^2 u)\bar N, \bar N\rangle +\frac 2 n x_n u^2- 2 x_n u|\n u|^{2}\right)dA
\end{eqnarray}
In the last equality we again used \eqref{bdry-prop} and also $\p_{\bar N} u=u$ on $T$.

Finally, substituting \eqref{Xeq3.5}-\eqref{Xeq4} into \eqref{Xeq3}, we get the conclusion \eqref{Xeq0}.
\end{proof}

\noindent{\bf Proof of Theorem \ref{mainthm}.}
From Propositions \ref{mp} and \ref{sub-harm} as well as $\O\subset \bb^n_+$, we have
\begin{eqnarray}\label{equiv}
x_n u\Delta P\le 0 \hbox{ in }\O.
\end{eqnarray}
We also know from Proposition \ref{int-id}
that $\int_\O x_n u\Delta P \, dx=0$.
It follows that $$x_n u\Delta P\equiv 0 \hbox{ in }\O.$$

Since $u<0$ in $\O$ by Proposition \ref{mp}, we conclude
$\De P\equiv 0$ in $\O$. From the proof of Proposition \ref{sub-harm}, we see immediately that
 $\n^{2}u$ is proportional to the identity matrix
in $\Omega$. Since $\Delta u=1$, we get
$u=\frac{1}{2n}|x-p|^2+A$ for some $p\in \rr^n$ and $A\in \rr$.

By the connectedness of $\Omega$ and $u=0$ on $\Sigma$, we conclude that $\S$ is part of a round sphere.
Using $\p_\nu u=c$ one verifies that $\O=\O_{nc}(a)$ and $u=u_{a, nc}$.
\qed


\

Theorem \ref{mainthm} has been proved under the regularity assumption \eqref{reg-ass} on $u$. In the special case when $\S$ meets $T$ orthogonally at $\G$, we can prove \eqref{reg-ass} is always true.
\begin{prop}\label{regularity}
Let $\O$ be as in Theorem \ref{mainthm}. Assume that $\S$ meets $T$ orthogonally. Let $u\in W_0^{1,2}(\O, \S)$ be a weak solution to \eqref{bvp}. Then $u\in W^{2,2}(\O)\cap C^{1,\a}(\bar \O)$ for some $\alpha\in(0,1)$.
\end{prop}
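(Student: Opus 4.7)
The plan is to prove the regularity locally at each boundary point; away from the interface $\Gamma$ the result is classical, and the real work is at $\Gamma$, where Dirichlet and Robin conditions meet. On $\Sigma\setminus\Gamma$, the classical Calder\'on--Zygmund and Schauder boundary theory for the Dirichlet problem $\Delta u=1$, $u|_{\Sigma}=0$ gives $u\in C^\infty$ up to $\Sigma$; on $T\setminus\Gamma$, the Robin condition $\partial_{\bar N}u=u$ is a smooth oblique-derivative boundary condition, so Lieberman's regularity theory (or Agmon--Douglis--Nirenberg) gives $u\in W^{2,2}\cap C^{1,\alpha}$ up to $T$. A standard covering and interpolation argument then reduces the proposition to proving $u\in W^{2,2}\cap C^{1,\alpha}$ in a neighborhood of an arbitrary $p\in\Gamma$.

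Near such $p$, the orthogonality assumption $\Sigma\perp T$ along $\Gamma$ is the key structural input: it allows a smooth diffeomorphism $\Phi$ defined on a neighborhood of $p$ in $\bar\O$ that straightens both boundary pieces simultaneously, sending this neighborhood onto a quarter-space $Q=\{y_{n-1}\ge0,\ y_n\ge0\}$ with $\Phi(\Sigma)\subset\{y_{n-1}=0\}$, $\Phi(T)\subset\{y_n=0\}$, and $\Phi(\Gamma)\subset\{y_{n-1}=y_n=0\}$, in such a way that the faces $\{y_{n-1}=0\}$ and $\{y_n=0\}$ remain orthogonal along the edge. The function $\tilde u=u\circ\Phi^{-1}$ then satisfies a uniformly elliptic divergence-form equation $\tilde L\tilde u=\tilde f$ on $Q$ with smooth coefficients, Dirichlet condition $\tilde u=0$ on $\{y_{n-1}=0\}$, and a Robin condition $\partial_{y_n}\tilde u=\beta\tilde u$ on $\{y_n=0\}$ for a smooth function $\beta$.

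The next step is to convert the Robin condition into a homogeneous Neumann condition by a multiplicative gauge: pick a smooth $\phi(y)$ with $\partial_{y_n}\phi(y',y_{n-1},0)=\beta(y',y_{n-1})$ (e.g.\ $\phi(y)=y_n\beta(y',y_{n-1})$) and set $w=e^{-\phi}\tilde u$. Then $w$ solves a new uniformly elliptic equation with smooth coefficients on $Q$, satisfies $w=0$ on $\{y_{n-1}=0\}$ and $\partial_{y_n}w=0$ on $\{y_n=0\}$. Now extend $w$ to the half-space $H=\{y_{n-1}\ge0\}$ by even reflection across $\{y_n=0\}$ and similarly reflect the coefficients evenly; because of the homogeneous Neumann condition the reflected $\hat w$ lies in $W^{1,2}(H)$ and is a weak solution of a uniformly elliptic Dirichlet problem on $H$ with smooth reflected coefficients, $L^2$ right-hand side, and smooth boundary $\{y_{n-1}=0\}$. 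Standard global $W^{2,2}$ and $C^{1,\alpha}$ boundary regularity for the Dirichlet problem on a half-space then give $\hat w\in W^{2,2}_{\mathrm{loc}}\cap C^{1,\alpha}_{\mathrm{loc}}$ up to $\{y_{n-1}=0\}$, and untwisting by $e^{\phi}$ and $\Phi$ yields $u\in W^{2,2}\cap C^{1,\alpha}$ near $p$.

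The main obstacle is the reflection step. Two points require care: first, one must verify that the even reflection of the (reflected) coefficients retains enough regularity -- Lipschitz is enough for $W^{2,2}$ estimates, and here the reflected coefficients are Lipschitz precisely because the straightening was done orthogonally (otherwise mixed coefficients $a^{n-1,n}$ would be odd in $y_n$ after reflection and produce a jump, obstructing $W^{2,2}$ regularity); second, the homogeneous Neumann condition must be genuinely satisfied in the trace sense so that no singular measure appears on $\{y_n=0\}$ in the reflected equation. Both are ensured by the orthogonality assumption together with the explicit gauge $w=e^{-\phi}\tilde u$, and this is precisely the place where the orthogonality hypothesis of the proposition is used.
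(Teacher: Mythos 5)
Your proposal is correct and follows the same overall strategy as the paper: reduce the Robin condition to a homogeneous Neumann condition by a multiplicative gauge, reflect across $T$, and use the orthogonality of $\S$ and $T$ to turn the mixed problem near $\G$ into a pure Dirichlet problem on a regular domain with coefficients good enough for $W^{2,p}$ theory. The one genuine difference is in the implementation of the reflection: you flatten a neighborhood of a point of $\G$ onto a quarter-space and then reflect evenly across the planar Neumann face, whereas the paper exploits the fact that $T$ lies on the unit sphere and reflects directly by the inversion $x\mapsto x/|x|^2$, with the explicit global gauge $v=u e^{-|x|}$ playing the role of your local gauge $e^{-\phi}$. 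The paper states explicitly that it follows Pacella--Tralli closely but replaces ``flattening the boundary \ldots and using planar reflection'' by spherical reflection; your route is precisely the flattening-plus-planar-reflection variant. What the spherical reflection buys is the avoidance of the most delicate point in your argument, which you correctly flag but do not fully carry out: after straightening, the boundary condition on $\{y_n=0\}$ is a conormal condition $a^{nj}\p_j\tilde u=\beta\tilde u$ rather than $\p_{y_n}\tilde u=\beta\tilde u$, and for the even reflection to produce a weak solution without a distributional defect on the face one must arrange $a^{nj}=0$ for $j\ne n$ there while simultaneously keeping $\Phi(\S)$ flat; these two requirements are compatible exactly because of orthogonality, but verifying this requires a careful choice of $\Phi$ (e.g.\ Fermi coordinates off $T$). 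In the paper's version the inversion is a conformal map preserving the sphere, so the reflected operator is computed explicitly, the $C^2$-matching of $w$ across $T$ is checked by hand, and Theorem 9.15 of Gilbarg--Trudinger is applied directly; orthogonality enters only to guarantee that $\tilde\S\cup\tilde\S_{ref}$ is a $C^2$ hypersurface near $x_0$. Both routes reach $W^{2,p}$ for all $p$ and hence $C^{1,\a}(\bar\O)$ by Sobolev embedding.
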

\begin{proof} Our proof follows the one in Pacella-Tralli \cite{PA} closely. Instead of flattening the boundary of the barrier and using planar reflection, here we use directly spherical reflection.

By the classical regularity theory for elliptic equations, we know that $u\in C^{\infty}(\bar{\Omega}\backslash\Gamma)$. It remains to prove the regularity up to $\Gamma$.

Denote $v(x):=u(x)e^{-|x|}$ for any $x\in \bar{\Omega}$. Then \eqref{bvp} becomes
\begin{equation}\label{trans}
\begin{cases}{}
Lv(x)=1 , & \text{in} \ \Omega,\\
v(x)=0 ,     & \text{on} \ \bar \Sigma,\\
 \p_{\bar N}v=0 , & \text{on}\ T,
\end{cases}
\end{equation}
where $Lv(x):=e^{|x|}\Delta v+2e^{|x|}\langle\frac{x}{|x|},\nabla v\rangle+e^{|x|}(1+\frac{n-1}{|x|})v(x)$.\\

Fix a point $x_{0}\in \Gamma$, we take a small neighborhood $U_{0}$ of $x_{0}$ in $\rr^n$ and consider the spherical reflection of $\tilde{\Omega}:=U_{0}\cap\Omega$:
\begin{eqnarray*}
 \Phi:  \tilde{\Omega}\to\tilde{\Omega}_{ref}:=\Phi(\tilde{\Omega})
\\ x\mapsto \frac{x}{|x|^{2}}.\qquad\qquad
\end{eqnarray*}
$\Phi$ is a diffeomorphism between $\tilde{\Omega}$ and $\tilde{\Omega}_{ref}$.
Note that the condition that $\S$ meets $T$ orthogonally guarantees domain $\Omega_{0}:=\widetilde{\Omega}\cup\widetilde{\Omega}_{ref}\cup (U_0\cap T)$
is $C^{2}$ near $x_{0}$. Denote $\tilde{\S}:=U_0\cap\S$ and $\tilde{\Sigma}_{ref}:=\Phi(\tilde{\S})$.  
\\ Next, we define
\begin{equation}\label{trans2}
w(x)=
\begin{cases}
v(x), &  {x\in \widetilde{\Omega}\cup (U_0\cap T)}\\
v\left(\frac{x}{|x|^{2}}\right), &  {x\in \widetilde{\Omega}_{ref}}.
\end{cases}
\end{equation}
Using the boundary condition of (\ref{trans}), we can check that $w(x)\in C^{2}(\Omega_{0})$.
In fact, we only need to check $w$ is $C^2$ across $\Omega_{0}\cap T$. 
By direct computation, we have, for $p\in \Omega_{0}\cap T$,
\begin{itemize}
  \item  [(1)] $\lim_{x\to p^-}w=\lim_{x\to p^+}w$;

  \item [(2)] $\p_{\bar N^-} w=\p_{\bar N^+} w=0$ at $p$;

  \item [(3)] $\p^2_{\bar N\bar N^-}w=\p^2_{\bar N\bar N^+}w=e^{-1}(\partial^2_{\bar N\bar N}u-u)$ at $p$.
\end{itemize}
Here $\p_{\bar N^{\mp}}$ means the left (right) first derivative  and
$$\p^2_{\bar N\bar N^{\mp}}w=\lim_{t\to 0^{\mp}}\frac{1}{t^2}\left(w(p+t\bar N)+w(p-t\bar N)-2 w(p)\right)$$
means the left (right) second derivative from $\tilde{\Omega}$ ($\widetilde{\Omega}_{ref}$) along $\bar N$.
The $C^2$-continuity of $w$ across $\O_0\cap T$ follows.

Moreover, in view of \eqref{trans} with \eqref{trans2}, we define a uniformly elliptic operator $Q$ in $\Omega_{0}$ as follows
\begin{equation*}
 Qw:=a_{ij}w_{ij}+b_kw_k+cw \ \text{in} \ \Omega_{0}
\end{equation*}
where
\begin{equation*}\label{A-ij}
a_{ij}(x)=
\begin{cases}
e^{|x|}\delta_{ij}, & {x\in \widetilde{\Omega}\cup (U_0\cap T)}\\
e^{\frac{1}{|x|}}|x|^{4}\delta_{ij}, & {x\in \widetilde{\Omega}_{ref}},
\end{cases}
\end{equation*}
\begin{equation*}\label{b-k}
\qquad\qquad b_{k}(x)=
\begin{cases}
2e^{|x|}\cdot\frac{x_{k}}{|x|}, &  {x\in \widetilde{\Omega}\cup (U_0\cap T)}\\
e^{\frac{1}{|x|}}((4-2n)|x|^{2}-2|x|)x_{k}, &  {x\in \widetilde{\Omega}_{ref}},
\end{cases}
\end{equation*}
\begin{equation*}\label{c}
\qquad c(x)=
\begin{cases}
e^{|x|}(1+\frac{n-1}{|x|}), &  {x\in \widetilde{\Omega}\cup (U_0\cap T)}\\
e^{\frac{1}{|x|}}(1+(n-1)|x|), &  {x\in \widetilde{\Omega}_{ref}}.
\end{cases}
\end{equation*}
Then we have $Qw =1$ in $\O_0$. We observe that $a_{ij}\in C^{0}(\bar{\Omega}_{0}),$ $b_{k}\in L^{\infty}(\Omega_{0})$ and $c\in C^{0}(\bar \Omega_{0})$.
Since $w(x)=0$ along $\bar{\tilde{\S}}\cap \tilde{\S}_{ref}$,
 we can deduce from  Theorem 9.15 in \cite{GN} that $w\in W^{2,p}(U)$ for some neighborhood $U\subset \bar \Omega_{0}$ of $x_0$ for any $p$.  Restricting $w$ to a neighborhood of $x_0$ in $\O$ and taking account that $u$ is smooth in the interior of $\O$, we conclude that $u\in W^{2,p}(\O)$ for any $p$. By Sobolev-Morrey's embedding theorem, $u\in C^{1,\alpha}(\bar \O)$. 
 \end{proof}
\noindent{\bf Proof of Theorem \ref{mainthm1.5}.} Theorem \ref{mainthm1.5} follows from Proposition \ref{regularity} and Theorem \ref{mainthm}. \qed

\
\section{Partial torsional rigidity}\label{part-torsion}

In this section, we study the partial torsional rigidity. We first derive the Hadamard variational formula.

\begin{prop}\label{Had}
Let $\S^t, t\in (-\ep, \ep)$ be a smooth variation of $\S$ given by a family of embedding $x^t: \ss^{n-1}_+\to \bb_+^n$ such that $x^0(\ss^{n-1}_+)=\S$ and $\frac{d}{dt}|_{t=0} x^t= Y\in T\ss^{n-1}$.  Let $\O^t$ be the enclosed domain by $\S^t$ and $\ss^{n-1}$.
Then
\begin{eqnarray*}
\frac{d}{dt}\Big|_{t=0}\tilde{\tau}(\O^t)=\int_{\S} (\p_\nu u)^2\<Y,\nu\> dA.
\end{eqnarray*}
\end{prop}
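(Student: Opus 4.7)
The plan is to use the representation $\tilde\tau(\O^t)=-\int_{\O^t}u^t\,dx$ from \eqref{part_torsion1}, where $u^t$ solves \eqref{bvp} on $\O^t$, differentiate in $t$, and evaluate the resulting integral via Green's identity applied to the linearised problem for the shape derivative.

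First I extend $Y$ to a smooth vector field $V$ on $\overline{\bb^n_+}$ that is tangent to $\ss^{n-1}$ along $T$ (so $\<V,\bar N\>=0$ on $T$), and let $\Phi^t$ denote its flow, so that $\Phi^t(\O)=\O^t$. Standard shape-derivative theory then produces the Eulerian shape derivative $\dot u(x):=\p_t u^t(x)|_{t=0}$, which solves the linearised mixed BVP
\begin{equation*}
\De\dot u=0\ \text{in }\O,\qquad \dot u=-(\p_\nu u)\<Y,\nu\>\ \text{on }\S,\qquad \p_{\bar N}\dot u=\dot u\ \text{on }T.
\end{equation*}
The Dirichlet datum on $\S$ is obtained by differentiating $u^t\circ\Phi^t\equiv 0$ on $\S$ and using $\n u=(\p_\nu u)\nu$ there; the Robin datum on $T$ follows by differentiating \eqref{bvp} at any interior point of $T$, which lies in $T^t$ for $|t|$ small.

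Next, the Reynolds transport formula together with $u|_\S=0$ and $\<V,\bar N\>|_T=0$ gives
\begin{equation*}
\frac{d}{dt}\Big|_{t=0}\int_{\O^t}u^t\,dx=\int_\O\dot u\,dx+\int_{\p\O}u\<V,N\>\,dA=\int_\O\dot u\,dx.
\end{equation*}
Since $\De u=1$ and $\De\dot u=0$, Green's identity (via Proposition \ref{int-by-parts}) yields
\begin{equation*}
\int_\O\dot u\,dx=\int_\O(\dot u\,\De u-u\,\De\dot u)\,dx=\int_\S(\dot u\,\p_\nu u-u\,\p_\nu\dot u)\,dA+\int_T(\dot u\,\p_{\bar N}u-u\,\p_{\bar N}\dot u)\,dA.
\end{equation*}
On $T$, the Robin condition $\p_{\bar N}u=u$ together with its linearisation $\p_{\bar N}\dot u=\dot u$ makes the integrand vanish. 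On $\S$, $u=0$ kills the second term and $\dot u\,\p_\nu u=-(\p_\nu u)^2\<Y,\nu\>$. Combining, $\int_\O\dot u\,dx=-\int_\S(\p_\nu u)^2\<Y,\nu\>\,dA$, whence
\begin{equation*}
\frac{d}{dt}\Big|_{t=0}\tilde\tau(\O^t)=-\frac{d}{dt}\Big|_{t=0}\int_{\O^t}u^t\,dx=\int_\S(\p_\nu u)^2\<Y,\nu\>\,dA.
\end{equation*}

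The main technical difficulty is justifying the existence and regularity of the shape derivative $\dot u$ up to the singular edge $\G=\S\cap T$ so as to legitimately apply the integration by parts. Following the spherical-reflection strategy of Proposition \ref{regularity} and pulling the whole family back to the fixed domain $\O$ via $\Phi^t$ yields the $W^{1,\infty}\cap W^{2,2}$ control needed to invoke Proposition \ref{int-by-parts}; the remaining manipulations are routine.
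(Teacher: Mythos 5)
Your proposal is correct and follows essentially the same route as the paper: both differentiate the representation $\tilde\tau(\O^t)=-\int_{\O^t}u^t\,dx$, identify the linearised mixed BVP for the shape derivative (harmonic, with Dirichlet datum $-\p_\nu u\,\<Y,\nu\>$ on $\S$ and the same Robin condition on $T$), and conclude via Green's second identity, with the $T$-integrand cancelling by the Robin conditions and the $\S$-integrand reducing to $(\p_\nu u)^2\<Y,\nu\>$. Your closing remark on the regularity of $\dot u$ near $\G$ needed to justify the integration by parts is a reasonable point that the paper's proof passes over in silence.
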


\begin{proof}
Let $u(t,\cdot)$ be the unique solution of \eqref{bvp} for $\O=\O^t$.
Then $$\tilde{\tau}(\O^t)=-\int_{\O^t} u(t,x) dx.$$
Denote $u= u(0,x)$ and $u'(0,\cdot)=\frac{\p}{\p t}\Big|_{t=0}u(t,\cdot)$.
Thus
\begin{eqnarray*}
\frac{d}{dt}\Big|_{t=0}\tilde{\tau}(\O^t)=-\int_{\S} u\<Y,\nu\> dA- \int_\O u'(0,x)dx= - \int_\O u'(0,x)dx.
\end{eqnarray*}
By taking derivative with respect to $t$ for
\begin{equation}\label{Heq1}
\begin{cases}{}
\Delta u(t, x)=1, & x\in \Omega,\\
u(t, x^t)=0,     & x^t\in\S^t,\\
\p_{\bar N}u(t, x) =u(t,x), &x \in T^t,
\end{cases}
\end{equation}
we get
\begin{equation}\label{Heq2}
\begin{cases}{}
\Delta u'(0, x)=0, & \hbox{ in }\Omega,\\
u'(0, x)+  \<\n u(0, x), Y\>=0,     &\hbox{ on }\S,\\
\p_{\bar N}u'(0, x) =u'(0,x), &\hbox{ on }T.
\end{cases}
\end{equation}
It follows from integration-by-parts in Proposition \ref{int-by-parts} and \eqref{Heq2} that
\begin{eqnarray*}
-\int_\O u'(0,x)dx &=& -\int_\O u'(0,x)\De u(0, x)dx\\&=&\int_{T}\partial_{\bar{N}}u'(0, x)u(0,x)-u'(0, x)\partial_{\bar{N}}u(0, x) dA
\\&&+ \int_{\Sigma}\partial_{\nu}u'(0, x)u(0,x)-u'(0, x)\partial_{\nu}u(0, x) dA
\\&=&\int_{T}u(0,x) u'(0,x) -u'(0,x)u(0, x) dA+ \int_\S \<\n u(0, x), Y\>\p_{\nu} u(0, x) dA
\\&=&\int_\S (\p_{\nu} u)^2\<Y,\nu\> dA.
\end{eqnarray*}
The assertion follows.
\end{proof}

\begin{prop}\label{maxim}
Let $\O$  be stationary for $\tilde{\tau}$ among all domains with fixed volume. Let $u$ be the unique weak solution of \eqref{bvp} in $\O$, Then $u$ satisfies in addition that $\p_{\nu} u$ is a constant along $\S$.
\end{prop}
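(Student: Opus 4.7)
The plan is to apply the Hadamard variational formula of Proposition \ref{Had} together with a standard Lagrange-multiplier argument coming from the volume constraint. For any admissible deformation $\O^t$ generated by a vector field $Y$ along $\S$, the first-order volume condition is
$$\frac{d}{dt}\Big|_{t=0}|\O^t|=\int_\S \<Y,\nu\>\,dA=0,$$
and stationarity of $\tilde\tau$ among fixed-volume deformations, together with Proposition \ref{Had}, gives
$$\int_\S (\p_\nu u)^2\,\<Y,\nu\>\,dA=0$$
for every such $Y$. Thus $(\p_\nu u)^2$ integrates to zero against every normal speed $\phi:=\<Y,\nu\>$ satisfying $\int_\S\phi\,dA=0$.

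Next I would argue, via a localized bump construction, that this forces $(\p_\nu u)^2$ to be constant on $\S$. Indeed, if $f:=(\p_\nu u)^2$ took distinct values at two points $p_1,p_2\in\S\setminus\G$, one could choose $\phi\in C_c^\infty(\S\setminus\G)$ supported in two small disjoint neighborhoods of $p_1$ and $p_2$ with $\int_\S\phi\,dA=0$ but $\int_\S f\phi\,dA\ne 0$, contradicting the identity above. Hence $(\p_\nu u)^2\equiv c^2$ for some constant $c\ge 0$ on $\S\setminus\G$. Since $u$ is smooth on $\bar\O\setminus\G$ by the interior regularity theory, $\p_\nu u$ is continuous there, and the connectedness of $\S$ then yields $\p_\nu u\equiv c$ or $\p_\nu u\equiv -c$ throughout $\S$, i.e., a single constant.

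The main technical subtlety is the realizability of arbitrary interior mean-zero $\phi$'s by admissible deformations: I must check that the vector field $Y=\phi\nu$, extended smoothly into a neighborhood of $\bar\O$ in $\rr^n$, generates a one-parameter family $\O^t\subset\bar\bb^n_+$ whose boundary still splits as $\bar\S^t\cup T^t$ with $\S^t$ and $T^t$ meeting along a common $(n-2)$-dimensional submanifold of $\ss^{n-1}$. Since $\phi$ is compactly supported away from $\G$, the deformation is trivial near $\G$, so the interface $\G$ is unchanged to first order and the geometric admissibility is automatic. This produces enough admissible test functions $\phi$ to run the Lagrange-multiplier argument and completes the proof.
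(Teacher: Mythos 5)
Your proposal is correct and follows essentially the same route as the paper: both derive the conclusion from the Hadamard formula of Proposition \ref{Had} together with the first variation of volume and a Lagrange-multiplier (equivalently, orthogonality-to-mean-zero test functions) argument. You merely spell out the bump-function construction and the admissibility of deformations supported away from $\G$, details the paper leaves implicit.
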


\begin{proof} For the same variation as in Proposition \ref{Had}, it is well-known that
$$\frac{d}{dt}\Big|_{t=0}{\rm Vol}(\O^t)=\int_\S\<Y,\nu\>dA.$$
Using the Hadamard formula, since $\O_{\max}$ is a maximizer, we find there exists a Lagrangian multiplier $\l$ such that
$$\int_\S (\p_{\nu} u)^2\<Y,\nu\> dA=\l \int_\S\<Y,\nu\>dA.$$
This holds for all $Y\in T\ss^{n-1}$. Thus $\p_{\nu} u$ is a constant along $\S$.
\end{proof}

\noindent{\bf Proof of Theorem \ref{mainthm2}.} Theorem \ref{mainthm2} follows from Proposition \ref{maxim} and Theorem \ref{mainthm}. \qed

\

{\bf Acknowledgements.} We are indebted to Professor Guofang Wang for numerous insight and discussion on this topic. We also thank Professor Martin Man-Chun Li for his interest. We are grateful to the anonymous referee who attracts our attention to the existence problem and the excellent book \cite{Henrot} on the shape optimization, as well as his/her numerous suggestions which help to improve the paper considerably.

\


\begin{thebibliography}{99}
\bibitem{Agr} M. S. Agranovich, {\it On a mixed Poincare-Steklov type spectral problem in a Lipschitz domain,} Russ. J. Math. Phys. 13(3), (2006), 281-290.

\bibitem{Al} A. D. Alexandrov, {\it Uniqueness theorem for surfaces in the large.V,} Vestnik, Leningrad Univ. 13, 19 (1958), 5-8, Amer. Math. Soc. Transl. 21, Ser. 2, 412-416.

\bibitem{BKPS} R. Ba\~nuelos, T. Kulczycki, I. Polterovich, and B. Siudeja. {\it Eigenvalue inequalities for mixed Steklov problems. In Operator theory and its applications,} volume 231 of Amer. Math. Soc. Transl. Ser. 2, pages 19-34. Amer. Math. Soc., Providence, RI, 2010.

\bibitem{BS} J. Bokowsky and E. Sperner, {\em Zerlegung konvexer K\"orper durch minimale Trennfl\"achen,  } J. Reine Angew. Math. 311/312 (1979), 80-100.


\bibitem{BZ} Y.D.Burago and V. A. Zalgaller, {\it Geometric inequalities,} Translated from the Russian by A. B. SosinskiÄ­. Grundlehren der Mathematischen Wissenschaften [Fundamental Principles of Mathematical Sciences], 285. Springer Series in Soviet Mathematics. Springer-Verlag, Berlin, 1988. xiv+331 pp.

\bibitem{CR}  G. Ciraolo and A. Roncoroni, {\it Serrin's type overdetermined problems in convex cones,} arXiv preprint arXiv:1806.08553v2, 2018.


\bibitem{CV} G. Ciraolo and L. Vezzoni, {\it On Serrin's overdetermined problem in space forms,} manuscripta math.  159 (2019) Issue 3-4,  445-452.

\bibitem{DPW} M. Del Pino, F. Pacard, and J. Wei, {\it Serrin's overdetermined problem and constant mean curvature surfaces,}  Duke Math. J.
 164 (2015) no. 14, 2643-2722.

\bibitem{FS1} A. Fraser and R.  Schoen,  {\em The first Steklov eigenvalue, conformal geometry, and minimal surfaces,}
Adv. Math. 226 (2011), no. 5, 4011--4030.

\bibitem{FS2} A. Fraser and R.  Schoen,
{\em Sharp eigenvalue bounds and minimal surfaces in the ball,}  Invent. Math. 203 (2016), no. 3, 823--890.


\bibitem{GN} D. Gilbarg and N.S.Trudinger, {\it Elliptic partial differential equations of second order,}Classics in Mathematics, Springer-Verlag(2001), Berlin, reprint of the 2nd ed.

\bibitem{Grisv} P. Grisvard, {\it Elliptic Problems in Nonsmooth Domains,} Pitman Advanced Publishing Program, Boston, 1985.

\bibitem{Henrot} A. Henrot and M. Pierre, {\it Variation et Optimisation de Formes: Une Analyse
G$\acute{e}$om$\acute{e}$trique,} Springer-Verlag Berlin Heidelberg, 2005. English version: {\it Shape Variation and Optimization: A Geometrical Analysis}, Tracts in Mathematics 28, European Mathematical Society, 2018.



\bibitem{Liebm}  G. M. Lieberman, {\em Mixed BVPs for elliptic and parabolic differential equations of second order,} J. Math. Anal. Appl. 113 (1986), no. 2, 422-440.

\bibitem{Magnanini} R. Magnanini,  {\it Alexandrov, Serrin, Weinberger, Reilly: symmetry and stability by integral identities,}  arXiv preprint arXiv:1709.08939, 2017.

\bibitem{Mag}  R. Magnanini and G. Poggesi, {\it Serrin's problem and Alexandrov's Soap Bubble
Theorem: stability via integral identities,} arxiv:1708.07392, to appear in
Indiana Univ. Math. J.

\bibitem{Mag2} R. Magnanini, G. Poggesi,{\it Nearly optimal stability for Serrin's problem
and the Soap Bubble Theorem,} preprint (2019) arxiv:1903.04823.


\bibitem{NT} C. Nitsch and C. Trombetti, {\it The classical overdetermined Serrin problem,} Complex Variables and Elliptic Equations 63 (2018) no. 7-8, 1107-1122.

\bibitem{Payne} L. E. Payne and Philip W. Schaefer, {\it Duality theorems in some overdetermined boundary value problems,} Math. Methods Appl. Sci. 11 (1989), no. 6, 805-819.

\bibitem{PO} G. P\'{o}lya,  {\it Torsional rigidity, principal frequency, electrostatic capacity and symmetrization,}  Quart. Appl. Math. 6 (1948) 267-277.

\bibitem{PA} F. Pacella and G. Tralli, {\it Overdetermined problems and constant mean curvature surfaces in cones,} arXiv preprint arXiv:1802.03197v2,  to appear in Revista Matem\'atica
Iberoamericana.

 \bibitem{QX} G. Qiu and C. Xia, {\it Overdetermined boundary value problems in $\ss^n$,} J. Math. Study 50 (2017), no. 2, 165-173.

 \bibitem{Re} R.C.  Reilly, {\em Applications of the Hessian operator in a Riemannian manifold,}  Indiana Univ. Math. J. 26 (1977), no. 3, 459--472.


\bibitem{Ro} A. Ros, {\it Compact hypersurfaces with constant higher order mean curvatures,}   Revista Matem\'atica
Iberoamericana, 3 (1987) 447--453.

\bibitem{RS}A. Ros  and R. Souam,  {\em On stability of capillary surfaces in a ball,}  Pacific J. Math. 178 (1997), No. 2, 345-361.



\bibitem{Se} J. Serrin, {\it A symmetry problem in potential theory,}
 Arch. Ration. Mech. Anal. 43 (1971), 304-318.

\bibitem{WX} G.Wang and C. Xia, {\it Uniqueness of stable capillary hypersurfaces in a ball,} Math. Ann.  374 (2019) Issue 3-4, 1845–1882.

\bibitem{We} H. Weinberger, {\it Remark on the preceding paper of Serrin,}
 Arch. Ration. Mech. Anal.  43 (1971), 319--320.
 
\bibitem{Wente} H. C. Wente, {\em  The symmetry of sessile and pendent drops,}  Pacific J. Math. 88 (1980), no. 2, 387--397.

\end{thebibliography}
\end{document}